\newtheorem{theorem}{Theorem}
\newtheorem{proposition1}{Proposition}
\newtheorem{proposition*}{Proposition}
\newtheorem{corollary1}{Corollary}
\newtheorem{lemma1}{Lemma}
\newtheorem{lemma*}{Lemma}
\newtheorem{remark1}{Remark}
\begin{document}
\title{A maximum principle for progressive optimal control of mean-filed forward-backward stochastic system involving random jumps and impulse controls}
\author{Tian Chen \hspace{1cm} Kai Du \hspace{1cm} Zongyuan Huang$^*$ \hspace{1cm} Zhen Wu}

\maketitle

\vspace{2mm}\noindent\textbf{Abstract.}\quad In this paper, we study an optimal control problem of a mean-field forward-backward stochastic system with random jumps in progressive structure, where both regular and singular controls are considered in our formula. In virtue of the variational technology, the related stochastic maximum principle (SMP) has been obtained, and it is essentially different from that in the classical predictable structure. Specifically, there are three parts in our SMP, i.e. continuous part, jump part and impulse part, and they are respectively used to characterize the characteristics of the optimal controls at continuous time, jump time and impulse time. This shows that the progressive structure can more accurately describe the characteristics of the optimal control at the jump time. We also give two linear-quadratic (LQ) examples to show the significance of our results.

\vspace{2mm} \noindent\textbf{Keywords.}\quad Mean-field Control, Progressive Structure, Random Jumps, Singular Control, Maximum Principle.

\footnotetext[1]{School of Mathematics, Shandong University, Jinan, Shandong, 250100, China. e-mail: huangzy@sdu.edu.cn.}
\footnotetext[2]{This work was supported by Natural Science Foundation of China, Grant/Award Number: 11831010, 11901353, 61961160732 and 12001319; Natural Science Foundation of Shandong Province, Grant/Award Number: ZR2019ZD42 and ZR2020QA025; the Taishan Scholars Climbing Program of Shandong, Grant/Award Number: TSPD20210302; the Postdoctoral Innovative Talent Support Program, Grant/Award Number: BX20200199.}


\section{Introduction}

Nowadays, mean-field control and game problems have attracted and gained more and more researchers' attentions, because of the extensive applications and significant theoretical values in many fields, such as economics , information technology, engineering, and system control \cite{Bensoussan2013,Espinosa2015,Huang2015,Li2016,wang2021social,hu2022decentralized,Du2022,XW2023,TSSA2023,XS2023}.  There are two typically used problem frames and models for the mean-field control. The first one is the large population system when contains many cooperative agents, and it is also called social optimal control firstly studied by \cite{Huang2012,Nourian2012}. The another one is mean-field type control system, where the dynamic is given by some mean-field system, likes mean-field stochastic differential equation (MF-SDE). Actually, since the theory of MF-SDEs studied by \cite{Kac1956}, the related mean-field type control has been a fascinating field of study, and a number of infusive and classic results have emerged. For example, Yong \cite{Yong2013} studied the linear-quadratic optimal control problem in mean-field system and got the related feedback form optimal control; the maximum principle of the mean field stochastic control problems with the joint distribution of the controlled state and the control process was gotten by \cite{Acciaio2019}; the mean-field optimal control in backward stochastic differential system was studied by Li, Sun and Xiong \cite{LSX2019}. Since the mean-field control is a very fast growing research field, it is very hard to exhaustively and roundly account all the developments of it,  and here we refer to \cite{Bensoussan2013,Carmona2018,Wang20202,SY2020} for interesting readers to get more details about it.

The stochastic maximum principle (SMP) is one of the important tools to solve the optimal control problem. Promoting the development of the maximum principle is not only of great significance in theory, but also of great practical significance in the application of finance, engineering and so on (see \cite{Pucci2007,Weitzman2009,Chen2010,Boltyanski2011}). It was first formulated by Pontryagin in 1950s and it converted the optimization problems into maximizing the corresponding Hamiltonian functions. Bismut \cite{Bismut1978} first introduced the linear backward stochastic differential equation (BSDE) as the the adjoint equations and Peng \cite{Peng1990} first obtained the general stochastic maximum principle by introducing the general BSDE as the second-order adjoint equations. Tang and Li \cite{Tang1994} obtained the stochastic maximum principle for the optimal control problem of stochastic system with random jumps, where both diffusion and jump terms explicitly depend on control variables. Framstad, \O ksendal and Sulem \cite{Framstad2004} obtained the sufficient maximum principle for the optimal control of jump diffusions and give the applications to finance. Wu  \cite{Wu2013} obtained general maximum principle for optimal control of forward-backward systems, where control domains are non-convex and forward diffusion coefficients explicitly depend on control variables. Song, Tang and Wu \cite{Song2020} obtained the stochastic maximum principle for progressive optimal control of the stochastic system with random jumps by introducing a new method of spike variation. They derived the rigorous maximum principle and revealed the essential difference between the stochastic control system with jump and the system without jump. Chen and Wu \cite{Chen2022} obtained the stochastic maximum principle for progressive optimal control of partially observed mean-field stochastic system with random jumps.

In recently years, stochastic impulse control problems have been widely concerned because of its wide application. Davis and Norman \cite{Davis1990} and \O ksendal and Sulem \cite{Oksendal2002} studied the portfolio selection problem with transaction costs by impulse control. Wu and Zhang \cite{Wu2011} first studied the stochastic maximum principle for optimal impulse controls. They obtained the necessary and sufficient maximum principle of forward-backward systems involving impulse controls, where the control domain is required to be convex.

Inspired by the above research, in this paper, we would like to study a new type of optimal control problem with mean-filed forward-backward stochastic system involving random jumps and impulse controls, and under the new progressive control framework, the necessary and sufficient stochastic maximum principle for the progressive optimal control is obtained. Since there are essential difference between the stochastic system with jump and the system without jump, it is essential to accurately characterize the characteristics of optimal control at the time of jump. In order to solve this problem, we introduced two mutually singular measures $\mu$ and $\nu$ to separate the SMP of continuous time and jump time, and obtained accurate characterizations of the optimal control at continuous time, jump time, and impulse time. Our results are very different from existing works.  The innovation of this paper is divided into the following two points. The first point is that the disturbance coefficients of the convex variation of general control and impulse control do not need to be the same. As far as we know, the coefficient is required to be the same in all existing works. But we don't think this assumption is necessary. The second point is that the main results of our paper, the stochastic maximum principle, which is divided into three parts. The first is the continuous part, which is used to describe the characteristics of optimal control when the jump does not occur. The second part is the jump part, which is used to describe the characteristics of optimal control when the jump occurs. The last part is the impulse part, which is used to describe the characteristics of optimal impulse control. In the progressive structure, the description of the optimal control of the stochastic system with jump is more accurate.

The main contributions of this paper are as follows:
\begin{itemize}
  \item (i) The stochastic maximum principle of mean-field forward-backward stochastic system involving random jumps and impulse controls in progressive framework is given. It is different from what we already know, there are three separate parts used to characterize optimal control in continuous, jump and impulse cases in our SMP.
  \item (ii) Some estimates about the mean-field SDE with jump and impulse in progressive structure are given, which plays an important role in proving SMP.
  \item (iii) As an application, the related linear-quadratic (LQ) optimal control problem in MF-FBSDE with random jump and impulse control variable is considered. Moreover, Under some basic assumptions, we achieve its unique optimal control. Furthermore, the optimal control and corresponding state feedback representation is given by decoupling methods.
\end{itemize}

The rest of this paper is organized as follows. In section 2, we give some preliminary results of progressive structure. Section 3 formulates the optimal control problem and gives some assumptions. In section 4, we employ the convex variation and give some estimations. To get the necessary and sufficient maximum principle in progressive structure, the related adjoint equation is given by section 5. In section 6, we give two LQ examples to illustrate the significants of the progressive structure. We also give the optimal control and corresponding state feedback representation by decoupling methods.

\section{Preliminaries}
We fix $T>0$ as fixed time horizon. Let $\left(\Omega, \mathscr{F}, \{\mathscr{F}_t\}_{t\geq 0}, \mathbb P\right)$ be a given filtered complete probability space. On this space, there is a 1-dimensional $\mathscr{F}_t$-Brownian motion $B_t$ and a Poisson random measure $N$ on $[0,T]\times \mathcal{E}$ adapted to $\mathscr{F}_t$, where $\mathcal{E}$ is a standard measure space with a $\sigma$-field $\mathscr{B}(\mathcal{E})$. We assume that the Brownian motion and Poisson random measure are independent. The mean measure of $N$ is a measure on $([0,T]\times \mathcal{E}, \mathscr{B}([0,T])\otimes \mathscr{B}(\mathcal{E}))$ which has the form $Leb \times \lambda$, where $Leb$ denotes the Lebesgue measure on $[0,T]$ and $\lambda$ is a finite measure on $\mathcal{E}$. For any $D\in \mathscr{B} (\mathcal{E})$ and $t\in [0,T]$, since $\lambda(D)<\infty$, we set $\tilde{N}(\omega, [0,t]\times D)\triangleq N(\omega, [0,t]\times D)-t\lambda(D)$. It is well known that $\tilde{N}(\omega, [0,t]\times D)$ is a martingale for every $D$. We assume that $\{\mathscr{F}_t\}_{t\geq 0}$ is generated $B_t$ and $N$, that is,
\begin{equation*}
  \mathscr{F}_t \triangleq\sigma \left(N([0,s],A), 0\leq s\leq t, A\in \mathscr{B}(\mathcal{E})\right)\vee \sigma \left(B_s; 0\leq s\leq t\right)\vee \mathscr{N},
\end{equation*}
where $\mathscr{N}$ denotes the totality of $\mathbb{P}$-null sets. Then $\mathscr{F}_t$ satisfies the usual conditions. Let $\{\tau_i\}$ be a given sequence of increasing $\mathscr{F}_t$ stopping times such that $\tau_i\uparrow+\infty$. We denote by $\mathscr{K}$ the class of processes $\eta_{\cdot}=\sum_{i\geq 1} \eta_i\mathbbm{1}_{[\tau_i,T]}(\cdot)$ such that each $\eta_i$ is a $R$-valued $\mathscr{F}_{\tau_i}$ measurable random variable. It is worth noting that, the assumption $\tau_i\uparrow+\infty$ implies that at most finitely many impulses may occurs on $[0,T]$.

Suppose that $M$ is a Euclid space, and $\mathscr{B}(M)$ is the Borel $\sigma$-field on $M$. A process $X: [0,T]\times \Omega \rightarrow M$ is called progressive (predictable) if $X$ is $\mathscr{G}/ \mathscr{B}(M) (\mathscr{P}/\mathscr{B}(M))$ measurable, where $\mathscr{G}(\mathscr{P})$ is the progressive (predictable) $\sigma$-field on $[0,T]\times \Omega$; a process $X: [0,T]\times \Omega \times \mathcal{E} \rightarrow M$ is called $\mathcal{E}$-progressive ($\mathcal{E}$-predictable) if $X$ is $\mathscr{G}\otimes \mathscr{B}(\mathcal{E})/ \mathscr{B}(M) (\mathscr{P} \otimes \mathscr{B}(\mathcal{E})/\mathscr{B}(M))$ measurable. In this paper, the stochastic integral we used is more general, that is, the integrand of the stochastic integral is $\mathcal{E}$-progressive rather than $\mathcal{E}$-predictable.

Now we introduce some notations. Given a process $X_t$ with c\`adl\`ag paths, $X_{0-}\triangleq0$ and $\Delta X_t\triangleq X_t- X_{t-}, t\geq 0$, let $\mu$ denote the measure on $\mathscr{F}\otimes \mathscr{B}([0,T])\otimes \mathscr{B} (\mathcal{E})$ generated by $N$ that $\mu(A)=E[\int_0^T\int_{\mathcal{E}} I_{A} N(\mathrm{d}s,\mathrm{d}e)]$. For any $\mathscr{F}\otimes \mathscr{B} ([0,T])\otimes \mathscr{B}(\mathcal{E})/\mathcal{B}(R)$ measurable integrable process $X$, we set $\mathbb{E}[X]\triangleq\int X \mathrm{d}\mu$ and denote by $\check{\mathbb{E}}[X]\triangleq\mathbb{E}[X|\mathscr{P}\otimes \mathscr{B}(\mathcal{E})]$ the Radon-Nikodym derivatives with respect to $\mathscr{P}\otimes \mathscr{B}(\mathcal{E})$. Note that $\mathbb{E}$ is not an expectation (for $\mu$ is not a probability measure), though it has similar properties to expectation. For convenience, we assume that $\tilde{x}$ denotes the ``expected value" of the state variable $x$ throughout this paper. More details of the stochastic integral of random measure are in Song, Tang and Wu \cite{Song2020}.

Then we give several necessary propositions as follows. The proof we can see \cite{Song2020}, so we omit it.
\begin{proposition1}
  If $H$ is a $\mathcal{E}$-progressive process such that $E[\int_0^T\int_{\mathcal{E}} H N(\mathrm{d}t,\mathrm{d}e)]<\infty$, then
  \begin{equation*}
    \left(\int_0^{\cdot}\int_{\mathcal{E}}HN(\mathrm{d}t,\mathrm{d}e)\right)^p_t=\int_0^t\int_{\mathcal{E}} \check{\mathbb{E}}[H] \lambda(\mathrm{d}e)\mathrm{d}s,
  \end{equation*}
  where $X^p$ is the dual predictable projection of $X$.
\end{proposition1}

\begin{proposition1}\label{2.2}
  If $H$ is ${\mathcal{E}}$-progressive and satisfies $E[\int_0^T\int_{{\mathcal{E}}}H^2 N(\mathrm{d}t,\mathrm{d}e)]<\infty$, then we have
  \begin{equation*}
    \begin{aligned}
      \int_0^T\int_{\mathcal{E}} H \tilde{N}(\mathrm{d}t,\mathrm{d}e)&=\int_0^T\int_{\mathcal{E}} H N(\mathrm{d}t,\mathrm{d}e) - \int_0^T\int_{\mathcal{E}} \check{\mathbb{E}}[H]\lambda(\mathrm{d}e)\mathrm{d}t,\\
 \Delta(H.\tilde{N})_t&=\int_{\mathcal{E}} H N({t},\mathrm{d}e),\\
      [H.\tilde{N},H.\tilde{N}]_t&=\int_0^t\int_{\mathcal{E}} H^2 N(\mathrm{d}t,\mathrm{d}e).
    \end{aligned}
  \end{equation*}
\end{proposition1}

\begin{remark1}\label{225}
  Under the conditions of the proposition above, we have
  \begin{equation}\label{2226}
    E\left[\int_0^T\int_{\mathcal{E}} H N(\mathrm{d}t,\mathrm{d}e)\right] =E\left[\int_0^T\int_{\mathcal{E}} \check{\mathbb{E}}[H]\lambda(\mathrm{d}e)\mathrm{d}t \right].
  \end{equation}
  In particular, if $H$ is ${\mathcal{E}}$-predictable, we have the well-known result
  \begin{equation*}
    E\left[\int_0^T\int_{\mathcal{E}} H N(\mathrm{d}t,\mathrm{d}e)\right] =E\left[\int_0^T\int_{\mathcal{E}} H\lambda(\mathrm{d}e)\mathrm{d}t \right].
  \end{equation*}
\end{remark1}

Now, we consider the following standard mean-field SDE with jump and impulses in progressive structure (MF-SDEP, for short):
\begin{equation}\label{SDE}
  \begin{aligned}
    X_t=x_0&+\int_0^t\int_{\mathcal{E}} b(s,X_s,E[X_s],e)\lambda(\mathrm{d}e)\mathrm{d}s+\int_0^t\int_{\mathcal{E}} \sigma(s,X_s,E[X_s],e)\lambda(\mathrm{d}e)\mathrm{d}B_s+\int_0^t G_s\mathrm{d}\eta_s \\
    &+\int_0^t \int_{\mathcal{E}} \gamma(s,X_{s-},E[X_{s-}],e)N(\mathrm{d}s,\mathrm{d}e) +\int_0^t\int_{\mathcal{E}} c(s,X_{s-},E[X_{s-}],e)\tilde{N}(\mathrm{d}s, \mathrm{d}e).
  \end{aligned}
\end{equation}
Here, $x_0\in R,\ b,\sigma,\gamma,c:\Omega\times [0,T] \times R\times R\times {\mathcal{E}}\rightarrow R$, and $G:[0,T]\rightarrow R$, and they satisfy the following
 assumptions.

\textbf{Assumption (A1)}.

$\bullet$ $b, \sigma, \gamma, c$ are $\mathscr{G}\otimes \mathscr{B}(R^2)\otimes \mathscr{B}({\mathcal{E}}) /\mathscr{B}(R)$ measurable, $G$ is $\mathscr{B}([0,T])$ measure continuous mapping.

$\bullet$ $b,\sigma,\gamma,c$ are uniformly Lipschitz continuous with respect to $(x,\tilde{x})$.

$\bullet$ $E(\int_0^T\int_{\mathcal{E}}|b(t,\omega,0,0,e)|\lambda(\mathrm{d}e)\mathrm{d}t)^2<\infty,\ E[\int_0^T(\int_{\mathcal{E}} |\sigma(t,\omega,0,0,e)|\lambda(\mathrm{d}e))^2\mathrm{d}t]<\infty$, \\
$E(\int_0^T\int_{\mathcal{E}} |\gamma(t,\omega,0,0,e)|N(\mathrm{d}t,\mathrm{d}e))^2<\infty,\ E[\int_0^T\int_{\mathcal{E}} |c(t,\omega,0,0,e)|^2 N(\mathrm{d}t,\mathrm{d}e)]<\infty $.

Moveover, we introduce a Banach space
\begin{equation*}
  S^2[0,T]\triangleq\bigg\{X:[0,T]\times \Omega\rightarrow R|X \text{ has c\`adl\`ag paths and adapted and } E\bigg[\sup_{0\leq t\leq T}|X_t|^2\bigg]<\infty\bigg\}
\end{equation*}
with norm $\|X\|^2=E[\sup_{0\leq t\leq T}|X_t|^2]$. Then, under Assumption (A1), we have the following existence-uniqueness and $L^2$ estimate in progressive structure about the MF-SDEP \eqref{SDE}.
\begin{lemma1}\label{SDEsolution-estimate}
  Under Assumption {\rm (A1)}, the mean-field SDEP \eqref{SDE} admits a unique solution in $S^2[0,T]$. Moveover, suppose that $X^i, i=1,2,$ are the solutions of the following equations
  \begin{equation}\label{xiii}
    \begin{aligned}
      X_t^i=x_0^i
      &+\int_0^t\int_{\mathcal{E}} b^i(s,X_s^i,E[X_s^i],e)\lambda(\mathrm{d}e)\mathrm{d}s+\int_0^t \int_{\mathcal{E}} \sigma^i(s,X_s^i,E[X_s^i],e)\lambda(\mathrm{d}e)\mathrm{d}B_s\\
      &+\int_0^t\int_{\mathcal{E}} \gamma^i(s,X_{s-}^i,E[X_{s-}^i],e)N(\mathrm{d}t,\mathrm{d}e) +\int_0^t\int_{\mathcal{E}} c^i(s,X_{s-}^i,E[X_{s-}^i],e)\tilde{N}(\mathrm{d}t,\mathrm{d}e),
    \end{aligned}
  \end{equation}
  respectively, and then we have
  \begin{equation}
    \begin{aligned}
      E\bigg[\sup_{0\leq t\leq T} |X_t^1-X_t^2|^2\bigg] &\leq C|x_0^1-x_0^2|^2+CE\bigg[\bigg(\int_0^T \bigg|\int_{\mathcal{E}} b^1(t,X_t^1,E[X_t^1],e)-b^2(t,X_t^1,E[X_t^1],e)\lambda(\mathrm{d}e)\bigg|\mathrm{d}t \bigg)^2\bigg]\\
      &\quad+CE\bigg[\int_0^T\bigg|\int_{\mathcal{E}}\sigma^1(t,X_t^1,E[X_t^1],e)-\sigma^2(t,X_t^1,E[X_t^1],e) \lambda(\mathrm{d}e)\bigg|^2 \mathrm{d}t\bigg]\\
      &\quad+CE\bigg[\bigg(\int_0^T\int_{\mathcal{E}}|\gamma^1(t,X_{t-}^1,E[X_{t-}^1],e)-\gamma^2(t,X_{t-}^1,E[X_{t-}^1],e)| N(\mathrm{d}t,\mathrm{d}e)\bigg)^2\bigg]\\
      &\quad+CE\bigg[\int_0^T\int_{\mathcal{E}}|c^1(t,X_{t-}^1,E[X_{t-}^1],e)-c^2(t,X_{t-}^1,E[X_{t-}^1],e)|^2 N(\mathrm{d}t,\mathrm{d}e)\bigg],
    \end{aligned}
  \end{equation}
  where $C$ is a positive real number related to $T$ and the Lipschitz constant.
\end{lemma1}
\begin{proof}
The proof we can be seen in Appendix A. 
\end{proof}

Next we give the existence and uniqueness of mean-field BSDE with jump and impulse in progressive structure (MF-BSDEP), which is given by the following form:
\begin{equation}\label{BSDE}
  Y_t=\xi+\int_t^T\int_{\mathcal{E}} f(s,\Lambda(s),E[\Lambda(s)],e)\lambda(\mathrm{d}e)\mathrm{d}s-\int_t^T Z_s\mathrm{d}B_s -\int_t^T\int_{\mathcal{E}} K_s \tilde{N}(\mathrm{d}s,\mathrm{d}e)-\int_t^TH_s\mathrm{d}\eta_s,
\end{equation}
where $\Lambda=(Y,Z,K)$ and $\xi$ is a $\mathscr{F}_T$-measurable random variable, $f:\Omega\times [0,T]\times R^3\times R^3\times {\mathcal{E}} \rightarrow R$, $H:[0,T]\rightarrow R$. Similarly, we first introduce two Banach spaces
\begin{equation*}
  M^2[0,T]\triangleq\bigg\{Z:[0,T]\times \Omega \rightarrow R]| Z \text{ is predictable and } E\left[\int_0^T|Z_s|^2 \mathrm{d}s \right]<\infty\bigg\}
\end{equation*}
with norm $\|Z\|^2=E[\int_0^T |Z_s|^2\mathrm{d}s]$, and
\begin{equation*}
  F^2[0,T]\triangleq\bigg\{K:[0,T]\times \Omega\times E\rightarrow R|K \text{ is $E$-predictable and } E\left[\int_0^T\int_{\mathcal{E}} |K_s|^2 \lambda(\mathrm{d}e)\mathrm{d}t \right]<\infty \bigg\}
\end{equation*}
with norm $\|K\|^2=E[\int_0^T\int_{\mathcal{E}} |K_t|^2 N(\mathrm{d}t,\mathrm{d}e)]$. In addition, for the backward system, we introduce the following additional assumptions:

\textbf{Assumption (A2)}.

$\bullet$ $f$ is $\mathscr{G}\otimes \mathscr{B}(R^3)\otimes \mathscr{B}(R^3)\otimes \mathscr{B}({\mathcal{E}})/\mathscr{B}(R)$ measurable, $H$ is $\mathscr{B}([0,T])$ measure continuous mapping.

$\bullet$ $f$ is uniformly Lipschitz with respect to $(y,z,k)$ and $(\tilde{y},\tilde{z},\tilde{k})$.

$\bullet$ $E(\int_0^T\int_{\mathcal{E}}|f(t,\omega,{\bf 0},e)|\lambda(\mathrm{d}e)\mathrm{d}t)^2<\infty$; $E[|\xi|^2]<\infty$, where ${\bf 0}$ is zero matrix with appropriate dimension.

Then, we can obtain the following results. Because the proof is similar to the Theorem 3.1 in \cite{Shen2013} and Theorem 2.8 in \cite{Song2021}, we omit the proof.
\begin{lemma1}\label{BSDEsolution}
  Under Assumption {\rm (A2)}, the mean-field BSDEP \eqref{BSDE} admits a unique solution in $S^2[0,T]\times M^2[0,T] \times F^2[0,T]$. Moreover, suppose that $Y^i,i=1,2$, are the solutions of the equations
  \begin{equation*}
    Y_t^i=\xi^i+\int_t^T\int_{\mathcal{E}}f^i(s,\Lambda^i(s),E[\Lambda^i(s)],e)\lambda(\mathrm{d}e)\mathrm{d}s-\int_t^T Z^i_s\mathrm{d}B_s-\int_t^T\int_{\mathcal{E}}K_s^i\tilde{N}(\mathrm{d}s,\mathrm{d}e),
  \end{equation*}
  respectively, then we have
  \begin{equation}
    \begin{aligned}
      &E\bigg[\sup_{0\leq t\leq T}|Y_t^1-Y_t^2|^2\bigg]+E\bigg[\int_0^T|Z_t^1-Z_t^2|^2\mathrm{d}t\bigg]+E\bigg[\int_0^T\int_{\mathcal{E}}|K_t^1 -K_t^2|^2\lambda(\mathrm{d}e)\mathrm{d}t\bigg]\\
      &\quad\leq CE\big[|\xi^1-\xi^2|^2\big]+CE\bigg[\bigg(\int_0^T\int_{\mathcal{E}}\big|f^1(t,\Lambda_t^1, E[\Lambda_t^1],e)- f^2(t,\Lambda_t^1,E[\Lambda_t^1],e) \big|\lambda(\mathrm{d}e)\mathrm{d}t\bigg)^2\bigg].
    \end{aligned}
  \end{equation}
\end{lemma1}

\section{Statement of the problem}

Let $\{T_n\}_{n\geq 1}$ be the jump time of $N([0,t]\times {\mathcal{E}}),\ T_n:=\inf\{t|N([0,t]\times {\mathcal{E}})\geq n\}$; then $\{T_n\}_{n\geq 1}$ is a sequence of stopping times that is strictly increasing. Let $U$ be a nonempty convex subset of $R$ and $\mathcal{K}$ be a nonempty convex subset of $\mathscr{K}$.
Consider the following progressive mean-field stochastic system with jumps and impulse
\begin{equation}\label{state}
  \left\{
    \begin{aligned}
      \mathrm{d}x_t&= \int_{\mathcal{E}}b(t,x_t,E[x_t],u_{(t,e)},e)\lambda(\mathrm{d}e)\mathrm{d}t +\int_{\mathcal{E}}\sigma(t,x_t,E[x_t],u_{(t,e)},e) \lambda(\mathrm{d}e)\mathrm{d}B_t+G_t\mathrm{d}\eta_t\\
      &\qquad+\int_{\mathcal{E}} \gamma(t,x_{t-},E[x_{t-}],u_{(t,e)},e)N(\mathrm{d}t,\mathrm{d}e)
      +\int_{\mathcal{E}} c(t,x_{t-},E[x_{t-}],u_{(t,e)},e)\tilde{N}(\mathrm{d}t,\mathrm{d}e),\\
      \mathrm{d}y_t&=-\int_{\mathcal{E}}g(t,x_t,E[x_t],y_t,E[y_t],z_t,E[z_t],u_{(t,e)},e)\lambda(\mathrm{d}e) \mathrm{d}t+H_t\mathrm{d}\eta_t+z_t\mathrm{d}B_t +\int_{\mathcal{E}}k_t\tilde{N}(\mathrm{d}t,\mathrm{d}e),\\
      x_0&=x_0,\qquad y_T=h(x_T,E[x_T]),
    \end{aligned}
  \right.
\end{equation}
where $x_0$ is a constant and $b,\sigma, \gamma, c: \Omega\times [0,T]\times R^2\times U\times {\mathcal{E}}\rightarrow R$; $g: \Omega\times[0,T]\times R^6\times U\times {\mathcal{E}}\rightarrow R$; $G,H:[0,T]\rightarrow R$, $h:\Omega\times R^2\rightarrow R$. We define the following control set
\begin{equation*}
  \begin{aligned}
    \mathcal{U}_{ad}&=\bigg\{u| u \text{ is } \text{progressive,} \text{ taking values in } U, E\bigg[\int_0^T\bigg(\int_{\mathcal{E}}|u_{(t,e)}|\lambda(\mathrm{d}e)\bigg)^2 \mathrm{d}t\bigg]<\infty\\ &\qquad\qquad\qquad\qquad\qquad\qquad \qquad\qquad\qquad E\bigg[\bigg(\int_0^T\int_{\mathcal{E}}|u_{(t,e)}|N(\mathrm{d}t,\mathrm{d}e)\bigg)^2\bigg]<\infty\bigg\},\\ 
    \mathcal{K}_{ad}&=\bigg\{\eta|\eta\text{ is adapted,} \text{ taking values in } \mathcal{K} \text{ such that } E \bigg[\bigg( \sum_{i\geq 1}|\eta_{i}|\bigg)^2\bigg]<\infty\bigg\}.
  \end{aligned}
\end{equation*}
We call $\mathcal{A}:=\mathcal{U}_{ad}\times \mathcal{K}_{ad}$ the admissible control.

\begin{remark1}\label{remark331}
  {\rm i)} In the existing work, the control variables do not depend on the variable ``e". As far as we know, the variable ``e" are usually used to describe the amplitude of jump process. For example, the system is affected by L$\acute{e}$vy process. $m$ denotes the number of times that the jump amplitude is less than $a$ before time $t$, which can be expressed by a Poisson random measure, that is $m=N([0,t]\times[0,a])$. Our model is mainly concerned with the control of jump time, so it is reasonable that the control is related to the amplitude of jump process.\\
  {\rm ii)} Let $(T_n,U_n)$ be an atom of Poisson random measure. For any $u\in\mathcal{U}_{ad}$, we have
  \begin{equation*}
    \begin{aligned}
      &E\bigg[\int_0^T\int_{\mathcal{E}}|u|^2N(\mathrm{d}t,\mathrm{d}e)\bigg]=E\bigg[\sum_{n=1}^{\infty}|u(\omega, T_n(\omega),U_n(\omega))|^2\bigg]\\
      &\leq E\bigg[\bigg(\sum_{n=1}^{\infty}|u(\omega, T_n(\omega),U_n(\omega))|\bigg)^2\bigg] =E\bigg[\bigg(\int_0^T\int_{\mathcal{E}}|u|N(\mathrm{d}t,\mathrm{d}e)\bigg)^2\bigg]<\infty.
    \end{aligned}
  \end{equation*}
  {\rm iii)} In order to simplify the symbols and assumptions, the state system in this paper is a decoupled forward-backward stochastic control system. In actually, our results can be extended to fully coupled forward-backward systems (i.e. the coefficients $b,\sigma,\gamma,c$ depend on $y_{\cdot}$ and $z_{\cdot}$), which does not present essential difficulties.
\end{remark1}
In order to ensure the well-posedness of the controlled system, we introduce the following assumptions.

\textbf{Assumption (H1)}

$\bullet$ $b,\sigma,\gamma,c$ are $\mathscr{G}\otimes \mathscr{B}(R^2)\otimes \mathcal{U}_{ad}\otimes \mathscr{B}({\mathcal{E}})/\mathscr{B}(R)$ measurable; $g$ is $\mathscr{G}\otimes \mathscr{B}(R^6)\otimes \mathcal{U}_{ad}\otimes \mathscr{B}({\mathcal{E}})/\mathscr{B}(R)$ measurable; $h$ is $\mathscr{F}_T\otimes \mathscr{B}(R^2)/\mathscr{B}(R)$ measurable; $G,H$ are $\mathscr{B}([0,T])$ measure continuous mapping.

$\bullet$ $b,\sigma,\gamma,c$ are continuously differentiable with respect to  $(x,\tilde{x},u)$, respectively, $g$ is continuously differentiable with respect to  $(\Lambda,\tilde{\Lambda},u)$, and their partial derivatives are uniformly bounded.

$\bullet$ $E\int_0^T [(\int_{\mathcal{E}}|b(t,\omega,{\bf 0},e)|\lambda(\mathrm{d}e))^2+(\int_{\mathcal{E}}|\sigma (t,\omega,{\bf 0},e)|\lambda(\mathrm{d}e))^2+(\int_{\mathcal{E}}|g(t,\omega,{\bf 0},e)|\lambda(\mathrm{d}e))^2]\mathrm{d}t <\infty$,\\
$E[\int_0^T \int_{\mathcal{E}}|\gamma(t,\omega,{\bf 0},e)|N(\mathrm{d}t,
\mathrm{d}e)]^2 <\infty$, $E\int_0^T\int_{\mathcal{E}} |c(t,\omega,{\bf 0},e)|^2N(\mathrm{d}t,\mathrm{d}e)<\infty$.

Under Assumption (H1), we know that there exits a unique solution of \eqref{state} for any admissible control from Lemma \ref{SDEsolution-estimate} and Lemma \ref{BSDEsolution}.  Furthermore, we here consider the following cost functional
\begin{equation}\label{cost}
  \begin{aligned}
    J({u,\eta})&=E\bigg[\int_0^T\int_{\mathcal{E}}l(t,\Lambda_t,E[\Lambda_t],u_{(t,e)},e)\lambda(\mathrm{d}e) \mathrm{d}t +\varphi(x_T,E[x_T])+\phi(y_0)\\
    &\qquad\qquad\qquad+\int_0^T\int_{\mathcal{E}} f(t,\Lambda_{t-},E[\Lambda_{t-}],u_{(t,e)},e)N(\mathrm{d}t,\mathrm{d}e)+\sum_{i\geq 1}\psi(\tau_i,\eta_i)\bigg],
  \end{aligned}
\end{equation}
where $l,f:\Omega\times [0,T]\times R^6\times U\times {\mathcal{E}}\rightarrow R$, $\varphi:\Omega\times R^2\rightarrow R$, $\phi:\Omega\times R\rightarrow R$, $\psi:[0,T]\times R\rightarrow R$. Here, for the cost functional, we need the following assumptions.

\textbf{Assumption (H2)}.

$\bullet$ $l,f$ are $\mathscr{G}\otimes \mathscr{B}(R^6)\otimes \mathscr{B}(U)\otimes \mathscr{B}({\mathcal{E}})/ \mathscr{B}(R)$ measurable, $\varphi$ is $\mathscr{F}_T\otimes \mathscr{B}(R^2)/\mathscr{B}(R)$ measurable, $\phi$ is $\mathscr{F}_0\otimes \mathscr{B}(R)/\mathscr{B}(R)$ measurable, $\psi$ is $\mathscr{F}_T\otimes \mathscr{B}(R)/\mathscr{B}(R)$ measurable.

$\bullet$ $l,f,\varphi$ are continuous differentiable with respect to $(\Lambda,\tilde{\Lambda},u)$ and there exist a constant $\bar{C}$ such that $|(l,f)
|\leq \bar{C}(1+|\Lambda|^2+|\tilde{\Lambda}|^2+|u|^2)$, $|(l,f)_{(\Lambda,\tilde{\Lambda},u)}
|\leq \bar{C}(1+\Lambda+|\tilde{\Lambda}|+|u|)$, $|\varphi| \leq \bar{C}(1+|x|^2+|\tilde{x}|^2)$, $|\varphi_{(x,\tilde{x},u)}|\leq \bar{C}(1+|x|+|\tilde{x}|)$, $|\phi|\leq \bar{C}(1+|y|^2)$, $|\phi_y|\leq \bar{C}(1+|y|)$, $|\psi|\leq \bar{C}(1+|\eta|^2)$, $|\psi_{\eta}|\leq \bar{C}(1+|\eta|)$.

For convenience, we call the coefficients satisfy Assumption (H) if and only if they satisfy Assumptions (H1) and (H2).
Then, the optimal control problem studied in this paper can be described as:

{\bf Problem (OCP)}
  Finding an admissible control $(\hat u,\hat\eta)\in \mathcal{A}$ such that
  \begin{equation*}
  J(\hat u,\hat \eta)=\inf_{(u,\eta)\in\mathcal{A}}J(u,\eta).
\end{equation*}

\begin{remark1}
  Let Assumption {\rm (H)} hold in predictable structure, i.e. $\gamma $ is $\mathscr{P}\otimes \mathscr{B}(R^2)\otimes \mathscr{B}(U)\otimes\mathscr{B}({\mathcal{E}})/\mathscr{B}(R)$ measurable, then we have the following result for any $X\in S^2[0,T]$ and any $(u,\eta)\in \mathcal{A}$
  \begin{equation*}
    |\gamma(t,x_{t-},E[x_{t-}],u_{(t,e)},e)|\leq \bar{C}(|\gamma(t,0,0,0,e)|+|x_{t-}|+|E[x_{t-}]| +|u_{(t,e)}|).
  \end{equation*}
  According to Proposition \ref{2.2}, we have
  \begin{equation*}
    \begin{aligned}
      &\int_0^t\int_{\mathcal{E}}\gamma(s,x_{s-},E[x_{s-}],u_{(s,e)},e)N(\mathrm{d}s,\mathrm{d}e)\\
      &=\int_0^t\int_{\mathcal{E}}\gamma (s,x_{s-},E[x_{s-}],u_{(s,e)},e)\tilde{N}(\mathrm{d}s,\mathrm{d}e)+\int_0^t\int_{\mathcal{E}} \check{\mathbb{E}}[\gamma(s,x_{s-},E[x_{s-}],u_{(s,e)},e)]\lambda(\mathrm{d}e)\mathrm{d}s\\
      &=\int_0^t\int_{\mathcal{E}}\gamma (s,x_{s-},E[x_{s-}],u_{(s,e)},e)\tilde{N}(\mathrm{d}s,\mathrm{d}e)+\int_0^t\int_{\mathcal{E}} \gamma(s,x_{s},E[x_{s}],u_{(s,e)},e)\lambda(\mathrm{d}e)\mathrm{d}s.
    \end{aligned}
  \end{equation*}
  Then the first equation of system \eqref{state} can be rewritten as
  \begin{equation*}
    \left\{
      \begin{aligned}
        \mathrm{d}x_t&=\int_{\mathcal{E}}(b+\gamma)(t,x_t,E[x_t],u_{(t,e)},e) \lambda(\mathrm{d}e)\mathrm{d}t +\int_{\mathcal{E}}\sigma(t,x_t,E[x_t],u_{(t,e)},e)\lambda(\mathrm{d}e)\mathrm{d}B_t\\ &\quad\quad+G_t\mathrm{d}\eta_t+\int_{\mathcal{E}}(\gamma+c)(t,x_{t-},E[x_{t-}],u_{(t,e)},e) \tilde{N}(\mathrm{d}t,\mathrm{d}e),\\
        x_0&=x_0.
      \end{aligned}
    \right.
  \end{equation*}
  This shows that $\gamma$ can be omitted. If $f$ is $\mathscr{P}\otimes \mathscr{B}(R)\otimes\mathscr{B}(R) \otimes\mathscr{B}(U)\otimes\mathscr{B}({\mathcal{E}})/\mathscr{B}(R)$ measurable, we have
  \begin{equation*}
    \begin{aligned}
      J({u,\eta})=E\bigg[\int_0^T\int_{\mathcal{E}}(l+f)(t,\Lambda_t,E[\Lambda_t],&u_{(t,e)},e) \lambda(\mathrm{d}e)\mathrm{d}t
      +\varphi(x_T,E[x_T])+\phi(y_0)+\sum_{i\geq 1}\psi(\tau_i,\eta_i) \bigg].
    \end{aligned}
  \end{equation*}
  We also know that $f$ can be omitted in the cost function.
\end{remark1}

\section{Variation}

Let $(\hat{u},\hat{\eta}=\sum_{i\geq 1}\hat{\eta}_i\mathbbm{1}_{[\tau_i,T]})$ be an optimal control for the control problem {\bf (OCP)}, and the corresponding trajectories of dynamics are denoted by $(\hat{x},\hat{y},\hat{z},\hat{k})$. Let $(v,\xi=\sum_{i\geq 1}\xi_i\mathbbm{1}_{[\tau_i,T]}) \in \mathcal{A}$ be another control process. For the reason that the control domains $\mathcal{U}$ and $\mathcal{K}$ are convex, we have for any $0\leq \epsilon_1 \leq 1$, $\ u^{\epsilon_1}=(1-\epsilon_1)\hat{u}+\epsilon_1 v\in \mathcal{U}$, for any $0\leq \epsilon_2 \leq 1$, $\eta^{\epsilon_2}=(1-\epsilon_2)\hat{\eta}+\epsilon_2 \xi\in\mathcal{K}$, the corresponding trajectory is denoted by $(x^{\epsilon},y^{\epsilon},z^{\epsilon},k^{\epsilon})$.

For simplification, we introduce the following notations,
\begin{equation*}
  \left\{
    \begin{aligned}
      &\alpha(t)\triangleq \alpha(t,\hat{x}_t,E[\hat{x}_t],\hat{u}_{(t,e)},e),\beta(t)\triangleq \beta(t,\hat{x}_{t-},E[\hat{x}_{t-}],\hat{u}_{(t,e)},e),\\
      &\alpha_{\iota}(t)\triangleq \alpha_{\iota}(t,\hat{x}_t,E[\hat{x}_t],\hat{u}_{(t,e)},e),\beta_{\iota}(t)\triangleq \beta_{\iota}(t,\hat{x}_{t-},E[\hat{x}_{t-}],\hat{u}_{(t,e)},e),\\
      &\alpha_\iota^{\theta}(t)\triangleq\alpha_\iota^{\theta}(t,\theta x_t^{\epsilon}+(1-\theta)\hat{x}_t,E[\theta x_t^{\epsilon}+(1-\theta)\hat{x}_t],\theta u_{(t,e)}^{\epsilon_1}+(1-\theta)\hat{u}_{(t,e)},e),\\
      &\beta_\iota^{\theta}(t)\triangleq\beta_\iota^{\theta}(t,\theta x_{t-}^{\epsilon}+(1-\theta) \hat{x}_{t-},E[\theta x_{t-}^{\epsilon}+(1-\theta)\hat{x}_{t-}],\theta u_{(t,e)}^{\epsilon_1}+(1-\theta)\hat{u}_{(t,e)},e),\\
      &\delta \alpha_\iota^{\theta}(t)\triangleq\alpha_\iota^{\theta}(t)-\alpha_\iota(t),\delta \beta_\iota^{\theta}(t)\triangleq\beta_\iota^{\theta}(t)-\beta_\iota(t),\\
      &\zeta_{\varpi}(t)\triangleq \zeta_{\varpi}(t,\hat{\Lambda}_t,E[\hat{\Lambda}_t], \hat{u}_{(t,e)},e),f_{\varpi}(t)\triangleq f_{\varpi}(t,\hat{\Lambda}_{t-},E[\hat{\Lambda}_{t-}],\hat{u}_{(t,e)},e),
    \end{aligned}
  \right.
\end{equation*}
where $\alpha = b,\sigma,\varphi$, $\beta=\gamma,c$, $\zeta=g,l,f$, $\iota=x,\tilde{x},u$, $\varpi=\Lambda,\tilde{\Lambda},u$.

Moreover, we introduce the following variational equation
\begin{equation}\label{varequ}
  \left\{
  \begin{aligned}
    \mathrm{d}{x}^1_t &= \int_{\mathcal{E}}\Big\{b_x(t){x}^1_t+b_{\tilde{x}}(t)E[{x}^1_t]+b_u(t)\hat{v}_t\Big\}\lambda(\mathrm{d}e)\mathrm{d}t 
    + \int_{\mathcal{E}}\Big\{\sigma_x(t){x}^1_t+\sigma_{\tilde{x}}(t)E[{x}^1_t] \\
    &\qquad+\sigma_u(t)\hat{v}_t\Big\}\lambda(\mathrm{d}e)\mathrm{d}B_t+ \int_{\mathcal{E}}\Big\{\gamma_x(t){x}^1_{t-}+\gamma_{\tilde{x}}(t) E[{x}^1_{t-}]+\gamma_u(t)\hat{v}_t\Big\}N(\mathrm{d}t,\mathrm{d}e)\\
    &\qquad+ \int_{\mathcal{E}} \Big\{c_x(t){x}^1_{t-}+c_{\tilde{x}}(t) E[{x}^1_{t-}]+c_u(t)\hat{v}_t\Big\}\tilde{N}(\mathrm{d}t,\mathrm{d}e)+G_t\mathrm{d}\hat{\xi}_t,\\
    \mathrm{d}y_t^1&=-\int_{\mathcal{E}}\Big\{g_x(t)x_t^1+g_{\tilde{x}}(t)E[x_t^1]+g_y(t)y_t^1+g_{\tilde{y}}(t)E[y_t^1] +g_z(t)z_t^1 +g_{\tilde{z}}(t)E[z_t^1]\\
    &\qquad+g_u(t)\hat{v}_t  \Big\}\lambda(\mathrm{d}e)\mathrm{d}t+H_t\mathrm{d}\hat{\xi}_t +z_t^1\mathrm{d}B_t +\int_{\mathcal{E}}k_t^1\tilde{N}(\mathrm{d}t,\mathrm{d}e),\\
    x_0^1=&0,\qquad y_T^1=h_x(T) x_T^1+h_{\tilde x}E[x_T^1],
  \end{aligned}
  \right.
\end{equation}
where $\hat{v}_t=\epsilon_1(v_{(t,e)}-\hat{u}_{(t,e)})$ and $\hat{\xi}=\epsilon_2(\xi-\hat{\eta})$. Since equation \eqref{varequ} is a decoupled forward-backward stochastic differential equation with jump and impulse (FBSDEP) with bounded coefficients, we obtain that mean-field FBSDEP \eqref{varequ} has unique solution.

\begin{remark1}
  It is worth noting that we do not require the coefficients of disturbances to be the same i.e. $\epsilon_1=\epsilon_2$. This is different from the existing literature. But our results can degenerate into the case of $\epsilon_1=\epsilon_2=\epsilon$.
\end{remark1}

\begin{lemma1}
Let Assumption {\rm(H)} hold, then we have the following estimates.
  \begin{equation*}
    \begin{aligned}
      &E\bigg[\sup_{0\leq t\leq T}|x_t^1|^2\bigg]\leq C(\epsilon_1^2+\epsilon_2^2),\qquad
      &&E\bigg[\sup_{0\leq t\leq T}|y_t^1|^2\bigg]\leq C(\epsilon_1^2+\epsilon_2^2),\\
      &E\bigg[\int_0^T|z_t^1|^2\mathrm{d}t\bigg]\leq C(\epsilon_1^2+\epsilon_2^2),\qquad
      &&E\bigg[\int_0^T\int_{\mathcal{E}}|k_t^1|^2\lambda(\mathrm{d}e)\mathrm{d}t\bigg]\leq C(\epsilon_1^2+\epsilon_2^2),
    \end{aligned}
  \end{equation*}
  where $C$ is a constant independent of $\epsilon_1$ and $\epsilon_2$.
\end{lemma1}
\begin{proof}
  By the $L^2$ estimate in Lemma \ref{SDEsolution-estimate}, for $x_t^1$ we have
  \begin{equation*}
    \begin{aligned}
      E\bigg[\sup_{0\leq t\leq T}|x_t^1|^2\bigg]&\leq CE\bigg[\int_0^T\bigg|\int_{\mathcal{E}}b_u(t) \hat{v}_t\lambda (\mathrm{d}e)\bigg|^2\mathrm{d}t+\int_0^T\bigg| \int_{\mathcal{E}}\sigma_u(t)\hat{v}_t \lambda(\mathrm{d}e)\bigg|^2\mathrm{d}t+\bigg|\int_0^TG_t\mathrm{d} \hat{\xi}_t \bigg|^2\bigg]\\
      &\qquad+E\bigg[\bigg(\int_0^T\int_{\mathcal{E}}|\gamma_u(t)\hat{v}_t|N(\mathrm{d}t,\mathrm{d}e)\bigg)^2\bigg]
      +E\bigg[\int_0^T\int_{\mathcal{E}}|c_u(t)\hat{v}_t|^2N(\mathrm{d}t,\mathrm{d}e)\bigg]\\
      &\leq CE\bigg[\epsilon_1^2\int_0^T\bigg|\int_{\mathcal{E}}(\delta v_{(t,e)})\lambda(\mathrm{d}e) \bigg|^2\mathrm{d}t+\epsilon_1^2\bigg(\int_0^T\int_{\mathcal{E}}|\delta v_{(t,e)}|N(\mathrm{d}t,
      \mathrm{d}e)\bigg)^2\\
      &\qquad +\epsilon_1^2\int_0^T\int_{\mathcal{E}}|\delta v_{(t,e)}|^2N(\mathrm{d}t,
      \mathrm{d}e)+\epsilon_2^2\bigg(\sum_{i\geq 1}|\xi_i-\hat{\eta}_i|\bigg)^2\bigg]\\
      &\leq C(\epsilon_1^2+\epsilon_2^2),
    \end{aligned}
  \end{equation*}
  where $\delta v_{(t,e)}=v_{(t,e)}-\hat u_{(t,e)}$. The last inequality is due to Remark \ref{remark331}. By the classical $L^2$ estimate of BSDEP, we obtain
  \begin{equation*}
    \begin{aligned}
      &E\bigg[\sup_{0\leq t\leq T}|y_t^1|^2\bigg]+E\bigg[\int_0^T|z_t^1|^2\mathrm{d}t\bigg] +E\bigg[\int_0^T\int_{\mathcal{E}}|k_t^1|^2\lambda(\mathrm{d}e)\mathrm{d}t\bigg]\\
      &\leq CE[|x_T^1|^2]+CE\bigg[\int_0^T\bigg|\int_{\mathcal{E}}g_u(t)\hat{v}_t\lambda(\mathrm{d}e)\bigg|^2 \mathrm{d}t\bigg]+CE\bigg[\bigg(\int_0^TH_t\mathrm{d}\hat{\xi}_t\bigg)^2\bigg]\\
      &\leq C(\epsilon_1^2+\epsilon_2^2).
    \end{aligned}
  \end{equation*}
  Then the proof is complete.
\end{proof}
Now we first present an important proposition, which is of great significance for subsequent results. 
\begin{proposition1}\label{prop433}
  Let Assumption {\rm(H)} hold, we have
  \begin{equation*}
    \lim_{(\epsilon_1,\epsilon_2)\rightarrow 0}\delta \alpha_\iota^{\theta}(t)=0,\qquad \text{and} \qquad \lim_{(\epsilon_1,\epsilon_2)\rightarrow 0}\delta \beta_\iota^{\theta}(t)=0,
  \end{equation*}
  with $\alpha=b,\sigma$ and $\beta=\gamma,c$. The limits are convergent in $\mu\times Leb+\nu\times Leb$, here $\nu=\mathbb{P}\times Leb\times \lambda$ is a measure on $\Omega\times [0,T]\times {\mathcal{E}}$.
\end{proposition1}
\begin{proof}
 Let $\tilde \epsilon=(\epsilon_1,\epsilon_2)$, for any sequence $\{\tilde \epsilon_n\}$ converges to $0$ as $n\rightarrow +\infty$, noticing
 \begin{equation*}
     \lim_{\epsilon_n\rightarrow 0}E\bigg[\sup_{0\leq t\leq T}\big|x_t^{\tilde \epsilon_n}-\hat x_t\big|^2\bigg]=0,
 \end{equation*}
 then we can know that there exists a subsequence $\{\tilde\epsilon_{n_k}\}$ of $\{\tilde\epsilon_n\}$ such that $x_t^{\tilde\epsilon_{n_k}}\rightarrow x_t$ a.s. as $k\rightarrow +\infty$ for any $t\in [0,T]$. For any sequence $\{\tilde\epsilon_n\}$ satisfies $\tilde\epsilon_n\rightarrow 0$ as $n\rightarrow +\infty$, we define
 \begin{equation*}
     \alpha_{\iota}^{n,\theta}(t)\triangleq \alpha_{\iota}(t,\theta x_t^{\tilde\epsilon_n}+(1-\theta)\hat{x}_t,E[\theta x_t^{\tilde\epsilon_n}+(1-\theta)\hat{x}_t],\theta u_{(t,e)}^{\tilde \epsilon_n}+(1-\theta)\hat{u}_{(t,e)},e).
 \end{equation*}
 For any subsequence $\{\tilde\epsilon_{n_k}\}$ of $\{\tilde\epsilon_n\}$, we have $\tilde\epsilon_{n_k}\rightarrow 0$ as $k\rightarrow +\infty$. Then we apply the result at the beginning of this proof, we obtain that there exists a subsequence $\{\tilde\epsilon_{n_{k_l}}\}$ such that $x_t^{\tilde\epsilon_{n_{k_l}}}\rightarrow \hat x_t$ a.s. as $\tilde\epsilon_{n_{k_l}}\rightarrow 0$. Let $A$ denote the convergent set, then $\mathbb P(A)=1$. Since $\alpha_{\iota}$ is continuous with respect to $(x,\tilde x,u)$, for each $t\in [0,T]$, $e\in \mathcal E$, $\theta \in [0,1]$, $\omega\in A$, we have
 \begin{equation*}
     \lim_{\tilde\epsilon_{n_{k_l}}\rightarrow 0}\alpha_{\iota}(t,\theta x_t^{\tilde\epsilon_{n_{k_l}}}+(1-\theta)\hat{x}_t,E[\theta x_t^{\tilde\epsilon_{n_{k_l}}}+(1-\theta)\hat{x}_t],\theta u_{(t,e)}^{\tilde \epsilon_{n_{k_l}}}+(1-\theta)\hat{u}_{(t,e)},e)=\alpha_{\iota}(t).
 \end{equation*}
 Thus we have $\alpha_{\iota}^{n_{k_l},\theta}(t)-\alpha_{\iota}(t)$ point converge to $0$ on $A\times [0,T]\times [0,1]$. Due to
 \begin{equation*}
     (\mu\times Leb+\nu \times Leb)(A\times [0,T]\times \mathcal E\times [0,1])=(\mu\times Leb+\nu \times Leb)(\Omega \times [0,T]\times \mathcal E\times [0,1])
 \end{equation*}
 We obtain that for any subsequence of $\alpha_{\iota}^{n,\theta}-\alpha_{\iota}(t)$, there exists a subsequence of this subsequence converges to $0$ a.s. in $\mu\times Leb+\nu\times Leb$. Therefore $\alpha_{\iota}^{n,\theta}-\alpha_{\iota}(t)$ converges to $0$ in $\mu\times Leb+\nu\times Leb$.

 Secondly, since
 \begin{equation*}
     \lim_{\tilde\epsilon_n}E\bigg[\sup_{0\leq t\leq T}\big|x_{t-}^{\tilde\epsilon_n}-x_{t-}\big|^2\bigg]\leq \lim_{\tilde\epsilon_n}E\bigg[\sup_{0\leq t\leq T}\big|x_{t}^{\tilde\epsilon_n}-x_{t}\big|^2\bigg]=0,
 \end{equation*}
 and can find a subsequence $\{\tilde\epsilon_{n_k}\}$ of $\{\tilde\epsilon_n\}$ such that $x_{t-}^{\tilde\epsilon_{n_k}}$ a.s. converges to $x_{t-}$, then we define
 \begin{equation*}
     \beta_{\iota}^{n,\theta}(t)\triangleq \beta_{\iota}(t,\theta x_{t-}^{\tilde\epsilon_n}+(1-\theta)\hat{x}_{t-},E[\theta x_{t-}^{\tilde\epsilon_n}+(1-\theta)\hat{x}_{t-}],\theta u_{(t,e)}^{\tilde \epsilon_n}+(1-\theta)\hat{u}_{(t,e)},e).
 \end{equation*}
 We can obtain the conclusion by the same argument in the first step.
\end{proof}
\begin{lemma1}\label{lemma422}
Let Assumption {\rm(H)} hold, then we have the following estimates.
\begin{equation*}
  \begin{aligned}
    &\lim_{(\epsilon_1,\epsilon_2)\rightarrow 0}\frac{1}{\epsilon_1^2+\epsilon_2^2} E\bigg[\sup_{0\leq t\leq T}|x_t^{\epsilon}-\hat{x}_t-x_t^1|^2\bigg]=0,\\
    &\lim_{(\epsilon_1,\epsilon_2)\rightarrow 0}\frac{1}{\epsilon_1^2+\epsilon_2^2} E\bigg[\sup_{0\leq t\leq T}|y_t^{\epsilon}-\hat{y}_t-y_t^1|^2\bigg]=0,\\
    &\lim_{(\epsilon_1,\epsilon_2)\rightarrow 0}\frac{1}{\epsilon_1^2+\epsilon_2^2} E\bigg[\int_0^T|z_t^{\epsilon}-\hat{z}_t-z_t^1|^2\mathrm{d}t\bigg]=0,\\
    &\lim_{(\epsilon_1,\epsilon_2)\rightarrow 0}\frac{1}{\epsilon_1^2+\epsilon_2^2} E\bigg[\int_0^T\int_{\mathcal{E}}|k_t^{\epsilon}-\hat{k}_t-k_t^1|^2\lambda(\mathrm{d}e)\mathrm{d}t\bigg]=0.
  \end{aligned}
\end{equation*}
\end{lemma1}
\begin{proof}
  Firstly, we give the equation that $\delta x_t^{\epsilon}=x_t^\epsilon-\hat{x}_t$ satisfies
  \begin{equation*}
    \begin{aligned}
      \delta x_t^{\epsilon}&=\int_0^t\int_{\mathcal{E}}\bigg\{\int_0^1b_x^{\theta}(s)\mathrm{d}\theta\cdot \delta x_t^{\epsilon}+\int_0^1b_{\tilde x}^{\theta}(s)\mathrm{d}\theta\cdot E[\delta x_t^{\epsilon}]+\int_0^1b_u^{\theta}(s)\mathrm{d}\theta\cdot \hat{v}_s \bigg\}\lambda(\mathrm{d}e)\mathrm{d}s\\
      &\quad+\int_0^t\int_{\mathcal{E}}\bigg\{\int_0^1\sigma_x^{\theta}(s)\mathrm{d}\theta\cdot \delta x_t^{\epsilon}+\int_0^1\sigma_{\tilde x}^{\theta}(s)\mathrm{d}\theta\cdot E[\delta x_t^{\epsilon}]+\int_0^1\sigma_u^{\theta}(s)\mathrm{d}\theta\cdot \hat{v}_s \bigg\}\lambda(\mathrm{d}e)\mathrm{d}B_s\\
      &\quad+\int_0^t\int_{\mathcal{E}}\bigg\{\int_0^1\gamma_x^{\theta}(s)\mathrm{d}\theta\cdot \delta x_{t-}^{\epsilon}+\int_0^1\gamma_{\tilde x}^{\theta}(s)\mathrm{d}\theta\cdot E[\delta x_{t-}^{\epsilon}]+\int_0^1\gamma_u^{\theta}(s)\mathrm{d}\theta\cdot \hat{v}_s \bigg\}N(\mathrm{d}t,\mathrm{d}e)\\
      &\quad+\int_0^t\int_{\mathcal{E}}\bigg\{\int_0^1c_x^{\theta}(s)\mathrm{d}\theta\cdot \delta x_{t-}^{\epsilon}+\int_0^1c_{\tilde x}^{\theta}(s)\mathrm{d}\theta\cdot E[\delta x_{t-}^{\epsilon}]+\int_0^1c_u^{\theta}(s)\mathrm{d}\theta\cdot \hat{v}_s \bigg\}\tilde{N}(\mathrm{d}t,\mathrm{d}e).
    \end{aligned}
  \end{equation*}
  By Lemma \ref{SDEsolution-estimate}, we obtain
  \begin{equation*}
    \begin{aligned}
      &E\bigg[\sup_{0\leq t\leq T}|\delta x_t^{\epsilon}-x_t^1|^2\bigg]\\
      &\leq CE\bigg[\int_0^t\int_{\mathcal{E}}\int_0^1\Big\{|\delta b_x^{\theta}(s)|^2|x_t^1|^2+|\delta b_{\tilde x}^{\theta}(s)|^2E[|x_t^1|^2]+|\delta b_u^{\theta}(s)|^2|\hat{v}_s|^2 \Big\}\mathrm{d}\theta\lambda(\mathrm{d}e)\mathrm{d}s\bigg]\\
      &\quad +CE\bigg[\int_0^t\int_{\mathcal{E}}\int_0^1\Big\{|\delta\sigma_x^{\theta}(s)|^2 |x_t^1|^2 +|\delta\sigma_{\tilde x}^{\theta}(s)|^2E[| x_t^1|^2] +|\delta\sigma_u^{\theta}(s)|^2 |\hat{v}_s|^2 \Big\}\mathrm{d}\theta\lambda(\mathrm{d}e)\mathrm{d}B_s\bigg]\\
      &\quad +CE\bigg[\int_0^t\int_{\mathcal{E}}\int_0^1\Big\{|\delta\gamma_x^{\theta}(s)|  |x_{t-}^1| +|\delta\gamma_{\tilde x}^{\theta}(s)| E[| x_{t-}^1|] +|\delta\gamma_u^{\theta}(s)|| \hat{v}_s| \Big\}\mathrm{d}\theta N(\mathrm{d}t,\mathrm{d}e)\bigg]^2\\
      &\quad +CE\bigg[\int_0^t\int_{\mathcal{E}}\int_0^1\Big\{|\delta c_x^{\theta}(s)|^2  |x_{t-}^1|^2 +|\delta c_{\tilde x}^{\theta}(s)|^2 E[| x_{t-}^1|^2]+|\delta c_u^{\theta}(s)|^2|\hat{v}_s |^2 \Big\}\mathrm{d}\theta\tilde{N}(\mathrm{d}t,\mathrm{d}e)\bigg]\\
      &\leq C(\epsilon_1^2+\epsilon_2^2)E\bigg[\int_0^t\int_{\mathcal{E}}\int_0^1\Big\{|\delta b_x^{\theta}(s)|^2+|\delta b_{\tilde x}^{\theta}(s)|^2+|\delta b_u^{\theta}(s)|^2|\delta v_{(s,e)} |^2 \Big\}\mathrm{d}\theta\lambda(\mathrm{d}e)\mathrm{d}s\bigg]\\
      &\quad +C(\epsilon_1^2+\epsilon_2^2)E\bigg[\int_0^t\int_{\mathcal{E}}\int_0^1\Big\{|\delta \sigma_x^{\theta}(s)|^2+|\delta\sigma_{\tilde x}^{\theta}(s)|^2 +|\delta\sigma_u^{\theta}(s)|^2 |\delta v_{(s,e)}|^2 \Big\}\mathrm{d}\theta\lambda(\mathrm{d}e)\mathrm{d}B_s\bigg]\\
      &\quad +C(\epsilon_1^2+\epsilon_2^2)E\bigg[\int_0^t\int_{\mathcal{E}}\int_0^1\Big\{|\delta\gamma_x^{\theta}(s)|  +|\delta\gamma_{\tilde x}^{\theta}(s)| +|\delta\gamma_u^{\theta}(s)||\delta{v}_{(s,e)}| \Big\}\mathrm{d}\theta N(\mathrm{d}t,\mathrm{d}e)\bigg]^2\\
      &\quad +C(\epsilon_1^2+\epsilon_2^2)E\bigg[\int_0^t\int_{\mathcal{E}}\int_0^1\Big\{|\delta c_x^{\theta}(s)|^2  +|\delta c_{\tilde x}^{\theta}(s)|^2 +|\delta c_u^{\theta}(s)|^2|\delta {v}_{(s,e)} |^2 \Big\}\mathrm{d}\theta\tilde{N}(\mathrm{d}t,\mathrm{d}e)\bigg].\\
    \end{aligned}
  \end{equation*}
  On the right hand, the terms $b,\sigma,c$ are bounded by $C(1+|\delta v_{(t,e)}|^2)$. Because the above formula is integrable, we obtain that the term of $b,\sigma,c$ converge to $0$ as $(\epsilon_1,\epsilon_2)\rightarrow 0$ by Proposition \ref{prop433} as following and dominated convergence theorem. For the term of $\gamma$, which is bounded by $C(1+|\delta v_{(t,e)}|)$. If we set $(M,\mathscr{M})=([0,T]\times \mathcal{E}\times [0,1],\mathscr{B}([0,T])\otimes\mathscr{B}({\mathcal{E}})\otimes \mathscr{B}([0,1]))$, $K(\omega,A\times B\times C)=Leb(C)N(\omega,A\times B)$, then we obtain the following result by Proposition \ref{B.2.} in Appendix B,
  \begin{equation*}
    \int_0^t\int_{\mathcal{E}}\int_0^1\Big\{|\delta\gamma_x^{\theta}(s)|  +|\delta\gamma_{\tilde x}^{\theta}(s)| +|\delta\gamma_u^{\theta}(s)||\delta{v}_{(s,e)}| \Big\}\mathrm{d}\theta N(\mathrm{d}t,\mathrm{d}e)\rightarrow 0, \qquad (\epsilon_1,\epsilon_2)\rightarrow 0.
  \end{equation*}
  And we can have the term of $\gamma$ converges to $0$ by the usual dominated convergence theorem.
  Therefore, we have
  \begin{equation*}
    E\bigg[\sup_{0\leq t\leq T}|x_t^{\epsilon}-\hat{x}_t-x_t^1|^2\bigg]\leq C_{(\epsilon_1,\epsilon_2)}(\epsilon_1^2+\epsilon_2^2)=o(\epsilon_1^2+\epsilon_2^2),
  \end{equation*}
  where $C_{(\epsilon_1,\epsilon_2)}$ is a positive constant satisfying $C_{(\epsilon_1,\epsilon_2)}\rightarrow 0$ as ${(\epsilon_1,\epsilon_2)}\rightarrow 0$, which may vary line by line. Then the first estimate is proved. The proof of residual estimation is similar to Lemma 3.1 in \cite{Wu2011} and Lemma 1 in \cite{Xiao2011}.
\end{proof}

Now we define
\begin{equation}\label{hatJ}
  \begin{aligned}
    \hat{J}&=E\bigg[\int_0^T\int_{\mathcal{E}}\Big\{l_x(t)x_t^1+l_{\tilde x}(t)E[x_t^1]+l_y(t)y_t^1+l_{\tilde y}(t) E[y_t^1]+l_z(t)z_t^1+l_{\tilde z}(t)E[z_t^1]\\
    &+l_u(t)\hat{v}_t \Big\}\lambda(\mathrm{d}e)\mathrm{d}t+\int_0^T\int_{\mathcal{E}}\Big\{f_x(t)x_t^1+f_{\tilde x}(t)E[x_t^1]+f_y(t)y_t^1+f_{\tilde y}(t) E[y_t^1]+f_z(t)z_t^1\\
    &+f_{\tilde z}(t)E[z_t^1]+f_u(t)\hat{v}_t \Big\}N(\mathrm{d}t,\mathrm{d}e)+\varphi_x(T)x_T^1+\varphi_{\tilde x}(T)E[x_T^1] +\phi_y(\hat{y}_0)y_0^1+\sum_{i\geq 1}\psi_{\eta}(\tau_i,\hat{\eta}_i)\hat{\xi}_i\bigg].
  \end{aligned}
\end{equation}
Then we have the following lemma.
\begin{lemma1}
  Let Assumption {\rm (H)} hold. Then we have
  \begin{equation*}
    \lim_{(\epsilon_1,\epsilon_2)\rightarrow 0}\frac{1}{(\epsilon_1^2+\epsilon_2^2)^{\frac{1}{2}}}
    \Big(J(u^{\epsilon_1},\eta^{\epsilon_2})-J(\hat{u},\hat{\eta})-\hat{J}\Big)=0.
  \end{equation*}
\end{lemma1}
\begin{proof}
  It is easy to know that
  \begin{equation*}
    \begin{aligned}
      &J(u^{\epsilon_1},\eta^{\epsilon_2})-J(\hat{u},\hat{\eta})-\hat{J}\\
      &=E\bigg[\int_0^T\int_{\mathcal{E}}\bigg\{\int_0^1\Big\{l_x^{\theta}(t)\delta x^{\epsilon}_t +l_{\tilde x}^{\theta}(t)E[\delta x^{\epsilon}_t ]+l_y^{\theta}(t)\delta y^{\epsilon}_t+l_{\tilde y}^{\theta} (t)E[\delta y^{\epsilon}_t]+l_z^{\theta}(t)\delta z^{\epsilon}_t +l_{\tilde z}^{\theta}(t)E[\delta z^{\epsilon}_t ]\\
      &\qquad+\delta l_u^{\theta}(t)\hat{v}_t\Big\}\mathrm{d}\theta -l_x(t)x_t^1-l_{\tilde x}(t)E[x_t^1]-l_y(t)y_t^1-l_{\tilde y}(t) E[y_t^1]-l_z(t)z_t^1-l_{\tilde z}(t)E[z_t^1] \bigg\}\lambda(\mathrm{d}e)\mathrm{d}t\bigg]\\
      &+E\bigg[\int_0^T\int_{\mathcal{E}}\bigg\{\int_0^1\Big\{f_x^{\theta}(t)\delta x^{\epsilon}_{t-} +f_{\tilde x}^{\theta}(t)E[\delta x^{\epsilon}_{t-} ]+f_y^{\theta}(t)\delta y^{\epsilon}_{t-}+f_{\tilde y}^{\theta} (t)E[\delta y^{\epsilon}_{t-}]+f_z^{\theta}(t)\delta z^{\epsilon}_t \\
      &\qquad+f_{\tilde z}^{\theta}(t)E[\delta z^{\epsilon}_t ]+\delta f_u^{\theta}(t)\hat{v}_t\Big\}\mathrm{d}\theta -f_x(t)x_{t-}^1-f_{\tilde x}(t)E[x_{t-}^1]-f_y(t)y_{t-}^1-f_{\tilde y}(t) E[y_{t-}^1]-f_z(t)z_t^1\\
      &\qquad-f_{\tilde z}(t)E[z_t^1] \bigg\}N(\mathrm{d}t,\mathrm{d}e)\bigg]+E\bigg[\int_0^1 \Big(\varphi_x^{\theta}(T)\delta x_T^{\epsilon}+\varphi_{\tilde{x}}(T)E[\delta x_T^{\epsilon}] \Big)\mathrm{d}\theta-\varphi_x(t)x_t^1-\varphi_{\tilde x}E[x_t^1]\bigg] \\ &+E\bigg[\int_0^1\phi_y^{\theta}(0)\delta y_0^{\epsilon}\mathrm{d}\theta -\phi_y(\hat{y}_0) y_0^1\bigg] +E\bigg[\sum_{i\geq 1}\Big(\int_0^1\psi_{\eta}(\tau_i,\theta\eta^{\epsilon_2} +(1-\theta)\hat{\eta}_i) \hat{\xi}_i \mathrm{d}\theta -\psi_{\eta}(\tau_i,\hat{\eta}_i)\hat{\xi}_i\Big)\bigg].
    \end{aligned}
  \end{equation*}
  Since $l_u(t)$ is Lipschitz continuous with respect to $(\Lambda,\tilde \Lambda,u)$, we have
  \begin{equation*}
    \begin{aligned}
      &E\bigg[\int_0^T\int_{\mathcal{E}}\int_0^1\delta l_u^{\theta}(t)\mathrm{d}\theta\cdot \hat{v}_t \lambda(\mathrm{d} e)\mathrm{d}t\bigg]\\
      &\leq CE\bigg[\int_0^T\int_{\mathcal{E}}\Big\{|\delta x^{\epsilon}_t||\hat{v}_t|+E[|\delta x^{\epsilon}_t|] |\hat{v}_t|+|\delta y^{\epsilon}_t||\hat{v}_t|+E[|\delta y^{\epsilon}_t|] |\hat{v}_t|+|\delta z^{\epsilon}_t||\hat{v}_t|+E[|\delta z^{\epsilon}_t|] |\hat{v}_t|+|\hat{v}_t|^2 \Big\}\lambda(\mathrm{d}e) \mathrm{d}t\bigg]\\
      &\leq CE\bigg[\sup_{0\leq t\leq T}|\delta x^{\epsilon}_t|^2\bigg]^{\frac{1}{2}} E\bigg[\int_0^T\int_{\mathcal{E}}|\hat{v}_t|^2\lambda(\mathrm{d}e) \mathrm{d}t\bigg]^{\frac{1}{2}}
      +CE\bigg[\sup_{0\leq t\leq T}|\delta y^{\epsilon}_t|^2\bigg]^{\frac{1}{2}} E\bigg[\int_0^T\int_{\mathcal{E}}|\hat{v}_t|^2\lambda(\mathrm{d}e) \mathrm{d}t\bigg]^{\frac{1}{2}}\\
      &\qquad +CE\bigg[\int_0^T|\delta z^{\epsilon}_t|^2\mathrm{d}t\bigg]^{\frac{1}{2}} E\bigg[\int_0^T\int_{\mathcal{E}}|\hat{v}_t|^2\lambda(\mathrm{d}e) \mathrm{d}t\bigg]^{\frac{1}{2}}
      + CE\bigg[\int_0^T\int_{\mathcal{E}}|\hat{v}_t|^2\lambda(\mathrm{d}e) \mathrm{d}t\bigg]\\
      &\leq C_{(\epsilon_1,\epsilon_2)}(\epsilon_1^2+\epsilon_2^2)^{\frac{1}{2}}.
    \end{aligned}
  \end{equation*}
  We also know that
  \begin{equation}\label{99999}
    \begin{aligned}
      \int_0^1 l_x^{\theta}(t)\mathrm{d}\theta\cdot\delta x_t^{\epsilon}-l_x(t)x_t^1=
      \int_0^1\delta l_x^{\theta}(t)\mathrm{d}\theta\cdot x_t^1+\int_0^1l_x^{\theta}(t)\mathrm{d}\theta \cdot (\delta x_t^{\epsilon}-x_t^1).
    \end{aligned}
  \end{equation}
  For the first term of the right hand of \eqref{99999}, we have
  \begin{equation*}
    \begin{aligned}
      &E\bigg[\int_0^T\int_{\mathcal{E}}\int_0^1\delta l_x^{\theta}(t)\mathrm{d}\theta\cdot x_t^1 \lambda(\mathrm{d}e)\mathrm{d}t\bigg]\\
      &\leq C\bigg(E\bigg[\sup_{0\leq t\leq T}|x_t^1|^2 \bigg] \bigg)^{\frac{1}{2}}\bigg(E\bigg[\int_0^T\int_{\mathcal{E}}\int_0^1|\delta l_x^{\theta}(t)|^2\mathrm{d}\theta \lambda(\mathrm{d}e)\mathrm{d}t\bigg]\bigg)^{\frac{1}{2}}\\
      &\leq C\bigg(E\bigg[\sup_{0\leq t\leq T}|x_t^1|^2 \bigg] \bigg)^{\frac{1}{2}}\bigg( CE\bigg[\sup_{0\leq t\leq T}|\delta x_t^{\epsilon}|^2\bigg]+CE\bigg[\int_0^T \int_{\mathcal{E}}|\hat{v}_t|^2\lambda(\mathrm{d}e)\mathrm{d}t \bigg]\bigg)^{\frac{1}{2}}\\
      &\leq C_{(\epsilon_1,\epsilon_2)}(\epsilon_1^2+\epsilon_2^2)^{\frac{1}{2}}.
    \end{aligned}
  \end{equation*}
  For the second term of the right hand of \eqref{99999}, by Lemma \ref{lemma422} we have
  \begin{equation*}
    \begin{aligned}
      &E\bigg[\int_0^T\int_{\mathcal{E}}\int_0^1l_x^{\theta}(t)\mathrm{d}\theta \cdot (\delta x_t^{\epsilon}-x_t^1)\lambda(\mathrm{d}e)\mathrm{d}t\bigg]\\
      &\leq CE\bigg[\sup_{0\leq t\leq T}|x_t^{\epsilon}-\hat{x}_t-x_t^1|^2\bigg]^{\frac{1}{2}} \bigg(E\bigg[\int_0^T\int_{\mathcal{E}}\int_0^1 |l_x^{\theta}(t)|^2\mathrm{d}\theta \lambda(\mathrm{d}e)\mathrm{d}t\bigg]\bigg)^{\frac{1}{2}}\\
      &\leq C_{(\epsilon_1,\epsilon_2)}(\epsilon_1^2+\epsilon_2^2)^{\frac{1}{2}}.
    \end{aligned}
  \end{equation*}
  Thus we have
  \begin{equation*}
    \begin{aligned}
      E\bigg[\int_0^T\int_{\mathcal{E}}\Big\{\int_0^1 l_x^{\theta}(t)\mathrm{d}\theta\cdot\delta x_t^{\epsilon}-l_x(t)x_t^1\Big\}\lambda(\mathrm{d}e)\mathrm{d}t\bigg]\leq C_{(\epsilon_1,\epsilon_2)}(\epsilon_1^2+\epsilon_2^2)^{\frac{1}{2}}.
    \end{aligned}
  \end{equation*}
  By the similar argument, we also have the same result for the term of $l_{\tilde x}, l_y,l_{\tilde y},l_z, l_{\tilde z}$. Then we have
  \begin{equation*}
    \begin{aligned}
      &E\bigg[\int_0^T\int_{\mathcal{E}}\bigg\{\int_0^1\Big\{l_x^{\theta}(t)\delta x^{\epsilon}_t +l_{\tilde x}^{\theta}(t)E[\delta x^{\epsilon}_t ]+l_y^{\theta}(t)\delta y^{\epsilon}_t+l_{\tilde y}^{\theta} (t)E[\delta y^{\epsilon}_t]+l_z^{\theta}(t)\delta z^{\epsilon}_t +l_{\tilde z}^{\theta}(t)E[\delta z^{\epsilon}_t ]+\delta l_u^{\theta}(t)\hat{v}_t\Big\}\mathrm{d}\theta\\
      &\quad -l_x(t)x_t^1-l_{\tilde x}(t)E[x_t^1]-l_y(t)y_t^1-l_{\tilde y}(t) E[y_t^1]-l_z(t)z_t^1-l_{\tilde z}(t)E[z_t^1] \bigg\}\lambda(\mathrm{d}e)\mathrm{d}t\bigg]
      \leq C_{(\epsilon_1,\epsilon_2)}(\epsilon_1^2+\epsilon_2^2)^{\frac{1}{2}}.
    \end{aligned}
  \end{equation*}
  Similarly, we also have the same estimates of the term $f,\varphi,\phi,\psi$. Therefore, we have
  \begin{equation*}
    |J(u^{\epsilon_1},\eta^{\epsilon_2})-J(\hat{u},\hat{\eta})-\hat{J}|\leq C_{(\epsilon_1,\epsilon_2)}(\epsilon_1^2+\epsilon_2^2)^{\frac{1}{2}}.
  \end{equation*}
\end{proof}

\section{Adjoint equations and the maximum principle.}

Now we introduce the following adjoint equation,

\begin{equation}\label{adjoint}
  \left\{
    \begin{aligned}
      \mathrm{d}q_t&=\int_{\mathcal{E}}\Big\{q_tg_y(t)-l_y(t)-\check{\mathbb{E}}[f_y(t)] +E[q_tg_{\tilde{y}}(t)-l_{\tilde y}(t)-\check{\mathbb E}[f_{\tilde y}(t)]]\Big\}\lambda(\mathrm{d}e)\mathrm{d}t\\
      &\qquad+\int_{\mathcal{E}}\Big\{q_tg_{z}(t)-l_z(t)- \check{\mathbb{E}}[f_z(t)]+{E}[q_tg_{\tilde{z}}(t) -l_{\tilde z}(t)-\check{\mathbb E}[f_{\tilde z}(t)]] \Big\}\lambda(\mathrm{d}e)\mathrm{d}B_t,\\
      \mathrm{d}p_t&=-\int_{\mathcal{E}}\Big\{p_tb_x(t)+\theta_t\sigma_x(t)-q_tg_x+l_x(t)+(p_t +\vartheta_t) \check{\mathbb E}[\gamma_x(t)]+\vartheta_t\check{\mathbb E}[c_x(t)]\\
      &\qquad+\check{\mathbb E}[f_{ x}(t)]+{E}\big[p_tb_{\tilde x}(t)-q_tg_{\tilde x}(t)+\theta_t\sigma_{\tilde x}(t)+l_{\tilde x}(t)+(p_t +\vartheta_t)\check{\mathbb E}[\gamma_{\tilde{x}}(t)]\\
      &\qquad+\vartheta_t\check{\mathbb E}[c_{\tilde x} (t)]+\check{\mathbb E}[f_{\tilde x}(t)] \big] \Big\}\lambda(\mathrm{d}e)\mathrm{d}t
      +\theta_t\mathrm{d}B_t+\int_{\mathcal{E}}\vartheta_t \tilde{N}(\mathrm{d}t,\mathrm{d}e),\\
      q_0&=-\phi_y(\hat y_0),\qquad p_T=\varphi_x(T)+{E}[\varphi_{\tilde x}(T)]-h_x(T)q_T -E[h_{\tilde x}(T)q_T].
    \end{aligned}
  \right.
\end{equation}

In order to get the existence and uniqueness of the mean-field FBSDEP \eqref{adjoint}. Noticing that the mean-field FBSDEP \eqref{adjoint} is decoupled, we refer to Lemma 2.4 in \cite{Tang1994} and \cite{Ma2015}. Since the partial derivatives of the state coefficients are bounded, there exists a unique solution of \eqref{adjoint} i.e. $(q,p,\theta,\vartheta)\in S^2[0,T]\times S^2[0,T]\times M^2[0,T]\times F^2[0,T]$.

Applying It\^o's formula to $p_tx_t^1+q_ty_t^1$ 
and noting that
\begin{eqnarray*}
  \begin{aligned}
    {E}\bigg[\int_0^T\Big(p_tG_t+q_tH_t\Big)\mathrm{d}\hat{\xi}_t\bigg]={E}\bigg[\sum_{i\geq 1}(p_{\tau_i}G_{\tau_i}+q_{\tau_i}H_{\tau_i})(\xi_i-\hat{\eta}_i)\bigg],
  \end{aligned}
\end{eqnarray*}
we have
\begin{equation}\label{cost222}
  \begin{aligned}
    &{E}\bigg[\varphi_x(T)x_t^1+\varphi_{\tilde{x}}(T){E}[x_t^1]+\int_0^T\int_{\mathcal{E}} \Big\{ l_x(t)x_t^1+l_{\tilde x}(t){E}[x_t^1]+l_y(t)y_t^1+l_{\tilde y}(t){E}[y_t^1]+l_z(t)z_t^1\\
    &+l_{\tilde z}(t){E}[z_t^1]\Big\}\lambda(\mathrm{d}e)\mathrm{d}t+\int_0^T\int_{\mathcal{E}}\Big\{f_x(t)x_{t-}^1+f_{\tilde x}(t){E}[x_{t-}^1] +f_y(t)y_{t-}^1+f_{\tilde y}(t){E}[y_{t-}^1]
    +f_z(t)z_{t-}^1\\
    &+f_{\tilde z}(t){E}[z_{t-}^1]\Big\}N(\mathrm{d}t,\mathrm{d}e)+\phi_y(\hat{y}_0)y_0^1\bigg]={E}\bigg[\int_0^T\int_{\mathcal{E}}\Big(p_tb_u(t)+\theta_t\sigma_u(t)-q_tg_u(t)\Big)\hat{v}_t\lambda(\mathrm{d}e)\mathrm{d}t \bigg] \\
    &+{E}\bigg[\sum_{i\geq 1}(p_{\tau_i}G_{\tau_i}+q_{\tau_i}H_{\tau_i})(\xi_i-\hat{\eta}_i)+\int_0^T\int_{\mathcal{E}}\Big((p_{t-}+\vartheta_t)\gamma_u (t)+\vartheta_tc_u(t)\Big)\hat{v}_t N(\mathrm{d}t,\mathrm{d}e)\bigg].\\
  \end{aligned}
\end{equation}
By substituting \eqref{cost222} into \eqref{hatJ}, we get
\begin{equation*}
  \begin{aligned}
    \hat{J}&={E}\bigg[\epsilon_1\int_0^T\int_{\mathcal{E}} \mathcal H_u(t,\hat{x}_t,\hat{y}_t,\hat{z}_t,\hat{u}_{(t,e)}, p_t,\theta_t,q_t) (v_{(t,e)}-\hat{u}_{(t,e)})\lambda(\mathrm{d}e)\mathrm{d}t\bigg]\\
    &\qquad+{E}\bigg[\epsilon_1\int_0^T\int_{\mathcal{E}}\widetilde{\mathcal H}_u(t,\hat{x}_{t-},\hat{y}_{t-},\hat{z}_{t-},\hat{u}_{(t,e)} ,p_{t-} ,\vartheta_t,e)(v_{(t,e)}-\hat{u}_{(t,e)})N(\mathrm{d}t,\mathrm{d}e)\bigg]\\
    &\qquad+{E}\bigg[\epsilon_2\sum_{i\geq 1}(p_{\tau_i}G_{\tau_i}+q_{\tau_i}H_{\tau_i}+\psi_{\eta}(\tau_i,\hat{\eta}_i))(\xi_i-\hat{\eta}_i)\bigg],
  \end{aligned}
\end{equation*}
where $\mathcal H$ and $\widetilde{\mathcal H}$ satisfy the following equations associated with random variables $(x,y,z)$ belonging to $ L^1(\Omega,\mathscr{F},P)$.
\begin{equation*}
  \begin{aligned}
    \mathcal{H}(t,x,y,z,u,p,\theta,q,e)&=-qg(t,x,E[x],y,E[y],z,E[z],u,e)+pb(t,x,E[x],u,e)\\
    &\qquad+\theta\sigma(t,x,E[x],u,e)+l(t,x,E[x],y,E[y],z,E[z],u,e),\\
    \widetilde{\mathcal{H}}_u(t,x,y,z,u,p,\vartheta,e)&=p\gamma(t,x,E[x],u,e)+\vartheta\gamma(t,x,E[x],u,e)+\vartheta c(t,x,E[x],u,e)\\
    &\qquad+f(t,x,E[x],y,E[y],z,E[z],u,e).
  \end{aligned}
\end{equation*}
In order to obtain the necessary conditions of optimal control, we need to calculate the following limitation
\begin{equation*}
  \lim_{\epsilon_1\rightarrow 0}\lim_{\epsilon_2\rightarrow 0}\frac{1}{\epsilon_1}\hat{J}\qquad \text{and}\qquad \lim_{\epsilon_2\rightarrow 0}\lim_{\epsilon_1\rightarrow 0}\frac{1}{\epsilon_2}\hat{J}.
\end{equation*}
We first give the first part of the stochastic maximum principle.
\begin{equation*}
  \begin{aligned}
    \lim_{\epsilon_1\rightarrow 0}\lim_{\epsilon_2\rightarrow 0}\frac{1}{\epsilon_1}\hat{J}&={E}\bigg[\int_0^T\int_{\mathcal{E}}\mathcal{H}_u(t,\hat{x}_t,\hat{y}_t, \hat{z}_t,\hat{u}_{(t,e)},p_t,\theta_t,q_t,e) (v_{(t,e)}-\hat{u}_{(t,e)})\lambda(\mathrm{d}e)\mathrm{d}t\bigg]\\
    &\qquad+{E}\bigg[\int_0^T\int_{\mathcal{E}}\widetilde{\mathcal{H}}(t,\hat{x}_{t-},\hat{y}_{t-},\hat{z}_{t-},\hat{u}_{(t,e)},p_{t-} ,\vartheta_t,e)(v_{(t,e)}-\hat{u}_{(t,e)})N(\mathrm{d}t,\mathrm{d}e)\bigg].\\
  \end{aligned}
\end{equation*}
For the atom $(T_n,U_n)$ of Possion random measure, We define
\begin{equation*}
  [\![(T_n,U_n)]\!]\triangleq\big\{(\omega,t,e)\in\Omega\times[0,T]\times {\mathcal{E}} \ |\ (T_n(\omega),U_n(\omega))=(t,e)\big\},
\end{equation*}
which is the graph of $(T_n,U_n)$. Similar to the Lemma 3.3 in Song \cite{Song2021}, we have the following lemma. The proof can be seen in the Appendix C. 
\begin{lemma1}\label{lemma551}
  For any $n\geq 1$, we have $[\![(T_n,U_n)]\!]\in\mathscr{G}\otimes \mathscr{B}({\mathcal{E}})$.
\end{lemma1}

Since $(T_n,U_n)$ is the atom of Possion random measure, it also is the atom of measure $\mu$. That is
\begin{equation*}
  \begin{aligned}
    \mu\Big(\Big(\bigcup_{n=1}^{\infty}[\![(T_n,U_n)]\!]\Big)^c\Big)&=0,\\
    \nu\Big(\bigcup_{n=1}^{\infty}[\![(T_n,U_n)]\!]\Big)=E\bigg[\sum_{n=1}^{\infty}\int_0^T \int_E &\mathbbm{1}_{[\![(T_n,U_n)]\!]}\lambda(\mathrm{d}e)\mathrm{d}t\bigg]=0.
  \end{aligned}
\end{equation*}
Therefore, $\mu$, $\nu$ are singular measures (remember $\nu=\mathbb{P}\times Leb\times \lambda$). If some property is $\mu$-a.s. true, it means that this property only can be used to characterizing features on $O:=\bigcup_{n=1}^{\infty}[\![(T_n,U_n)]\!]$. Similarly, if some property is $\nu$-a.s. true, it means that this property only can be used to characterizing features on $O^c$. For any $A\in\mathscr{G}$, any $V\in {U}$, define
\begin{equation*}
  v_{(t,e)}=
  \left\{
    \begin{aligned}
      &\hat{u},\qquad &&\text{if  } (t,\omega,e)\in O;\\
      &V,\qquad &&\text{if  } (t,\omega,e)\in A\cap O^c;\\
      &\hat{u},\qquad &&\text{if  } (t,\omega,e)\in A^c\cap O^c.
    \end{aligned}
  \right.
\end{equation*}
Then we can know that $v\in\mathcal{U}_{ad}$ by Lemma \ref{lemma551}, we have
\begin{equation*}
  \lim_{\epsilon_1\rightarrow 0}\lim_{\epsilon_2\rightarrow 0}\frac{1}{\epsilon_1}\hat{J}={E}\bigg[\int_0^T\int_{\mathcal{E}}\mathbbm{1}_A\mathcal{H}_u(t,\hat{x}_t,\hat{y}_t, \hat{z}_t,\hat{u}_{(t,e)},p_t,\theta_t,q_t,e)(V-\hat{u}_{(t,e)})\lambda(\mathrm{d}e)\mathrm{d}t\bigg]\geq 0,
\end{equation*}
for any $A\in\mathscr{G}$, any $V\in {U}$.
Therefore we obtain the following maximum principle, which called the continuous part of maximum principle
\begin{equation*}
  \mathcal{H}_u(t,\hat{x}_t,\hat{y}_t,\hat{z}_t,\hat{u}_{(t,e)},p_t,\theta_t,q_t,e)(V-\hat{u}_{(t,e)})\geq 0,\qquad \nu\text{-a.s.}
\end{equation*}
is true for any $V\in U$. For any $A\in\mathscr{G}$, any $V\in {U}$, define
\begin{equation*}
  v_{(t,e)}=
  \left\{
    \begin{aligned}
      &\hat{u},\qquad &&\text{if } (t,\omega,e)\in O^c;\\
      &V,\qquad &&\text{if } (t,\omega,e)\in A\cap O;\\
      &\hat{u},\qquad &&\text{if } (t,\omega,e)\in A^c\cap O.
    \end{aligned}
  \right.
\end{equation*}
Then we can know that $v\in\mathcal{U}_{ad}$ by Lemma \ref{lemma551}, we have
\begin{equation*}
  \begin{aligned}
    \lim_{\epsilon_1\rightarrow 0}\lim_{\epsilon_2\rightarrow 0}\frac{1}{\epsilon_1}\hat{J}={E}\bigg[\int_0^T\int_{\mathcal{E}}\mathbbm{1}_A\widetilde{\mathcal{H}}_u(t,\hat{x}_{t-},\hat{y}_{t-},\hat{z}_{t-}, \hat{u}_{(t,e)},p_{t-} ,\vartheta_t,e)(V-\hat{u}_{(t,e)})N(\mathrm{d}t,\mathrm{d}e)\bigg]\geq 0,\\
  \end{aligned}
\end{equation*}
for any $A\in\mathscr{G}$, any $V\in {U}$.
Therefore we obtain the following maximum principle, which called the jump part of maximum principle
\begin{equation*}
  \widetilde{\mathcal{H}}_u(t,\hat{x}_{t-},\hat{y}_{t-},\hat{z}_{t-}, \hat{u}_{(t,e)},p_{t-} ,\vartheta_t,e)(V-\hat{u}_t)\geq 0,\qquad \mu\text{-a.s.}
\end{equation*}
is true for any $V\in U$. Next we give the second part of the stochastic maximum principle.
\begin{equation*}
  \begin{aligned}
    \lim_{\epsilon_2\rightarrow 0}\lim_{\epsilon_1\rightarrow 0}\frac{1}{\epsilon_2}\hat{J}={E}\bigg[\sum_{i\geq 1}(p_{\tau_i}G_{\tau_i}+q_{\tau_i}H_{\tau_i} +\psi_{\eta}(\tau_i,\hat{\eta}_i))(\xi_i-\hat{\eta}_i)\bigg]\geq 0.
  \end{aligned}
\end{equation*}

Then we give the main theorem of this paper.
\begin{theorem}\label{necessary}
  Let Assumption {\rm (H)} hold. Suppose that $(\hat{u},\hat{\eta})$ is the optimal control, $(\hat{x},\hat{y},\hat{z},\hat{k})$ is the corresponding trajectory, and $(q,p,\theta,\vartheta)\in S^2[0,T]\times S^2[0,T]\times M^2[0,T]\times F^2[0,T]$ satisfies \eqref{adjoint}. Then for any $V\in{U}$, we have
  \begin{flalign}
      {\mathcal{H}}_u(t,\hat{x}_t,\hat{y}_t,\hat{z}_t,\hat{u}_{(t,e)},p_t,\theta_t,q_t,e)(V-\hat{u}_{(t,e)})\geq 0,\qquad \nu \text{-a.s.} \label{contin}\\
      \widetilde{{\mathcal{H}}}_u(t,\hat{x}_{t-},\hat{y}_{t-},\hat{z}_{t-}, \hat{u}_{(t,e)},p_{t-} ,\vartheta_t,e)(V-\hat{u}_{(t,e)})\geq 0,\qquad \mu\text{-a.s.}\label{jump}
  \end{flalign}
  and for any $\xi\in\mathcal{K}_{ad}$, we have
  \begin{equation}\label{impulse}
    {E}\bigg[\sum_{i\geq 1}(p_{\tau_i}G_{\tau_i}+q_{\tau_i}H_{\tau_i} +\psi_{\eta}(\tau_i,\hat{\eta}_i))(\xi_i-\hat{\eta}_i)\bigg]\geq 0.
\end{equation}
\end{theorem}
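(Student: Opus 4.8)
The plan is to extract all three inequalities \eqref{contin}, \eqref{jump}, \eqref{impulse} from the single first-order relation $\hat J \ge 0$, which is a consequence of optimality. Since $(\hat u,\hat\eta)$ minimizes $J$ and the admissible sets $\mathcal{U}_{ad},\mathcal{K}_{ad}$ are convex, for every $\epsilon_1,\epsilon_2\in[0,1]$ we have $J(u^{\epsilon_1},\eta^{\epsilon_2})-J(\hat u,\hat\eta)\ge 0$. Combining this with the expansion established just before Section 5, namely $J(u^{\epsilon_1},\eta^{\epsilon_2})-J(\hat u,\hat\eta)-\hat J=o((\epsilon_1^2+\epsilon_2^2)^{1/2})$, I would first conclude $\hat J\ge o((\epsilon_1^2+\epsilon_2^2)^{1/2})$. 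I would then use the representation of $\hat J$ obtained by applying It\^o's formula to $p_tx_t^1+q_ty_t^1$ and substituting \eqref{cost222} into \eqref{hatJ}: this shows $\hat J$ decomposes into three linear functionals of the perturbation, an $\epsilon_1$-weighted continuous term driven by $\mathcal H_u$ integrated against $\lambda(\mathrm{d}e)\mathrm{d}t$, an $\epsilon_1$-weighted jump term driven by $\widetilde{\mathcal H}_u$ integrated against $N(\mathrm{d}t,\mathrm{d}e)$, and an $\epsilon_2$-weighted impulse term involving $p_{\tau_i}G_{\tau_i}+q_{\tau_i}H_{\tau_i}+\psi_{\eta}(\tau_i,\hat\eta_i)$.

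To isolate these effects I would take the two iterated limits $\lim_{\epsilon_1\to0}\lim_{\epsilon_2\to0}\frac1{\epsilon_1}\hat J$ and $\lim_{\epsilon_2\to0}\lim_{\epsilon_1\to0}\frac1{\epsilon_2}\hat J$, each of which is $\ge 0$ by the previous step. The second limit annihilates the two control functionals and leaves only the impulse functional, immediately giving \eqref{impulse} for arbitrary $\xi\in\mathcal{K}_{ad}$. The first limit annihilates the impulse term and leaves the sum of the continuous and jump functionals; the genuinely new difficulty is separating these two, since the continuous part is integrated against $\nu=\mathbb{P}\times Leb\times\lambda$ and the jump part against $\mu$, yet both appear simultaneously in this limit. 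Here I would exploit that $\mu$ and $\nu$ are mutually singular, carried respectively by $O=\bigcup_{n\ge1}[\![(T_n,U_n)]\!]$ and by $O^c$.

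The separation mechanism is to feed in the two localized convex variations displayed before the theorem. For arbitrary $A\in\mathscr G$ and $V\in U$, choosing $v$ equal to $V$ on $A\cap O^c$ and equal to $\hat u$ elsewhere makes the jump functional vanish ($\mu$-a.s.) and leaves $E[\int_0^T\int_{\mathcal E}\mathbbm{1}_A\,\mathcal H_u(t,\hat x_t,\hat y_t,\hat z_t,\hat u_{(t,e)},p_t,\theta_t,q_t,e)(V-\hat u_{(t,e)})\lambda(\mathrm{d}e)\mathrm{d}t]\ge0$; choosing $v$ equal to $V$ on $A\cap O$ instead makes the continuous functional vanish ($\nu$-a.s.) and leaves $E[\int_0^T\int_{\mathcal E}\mathbbm{1}_A\,\widetilde{\mathcal H}_u(t,\hat x_{t-},\hat y_{t-},\hat z_{t-},\hat u_{(t,e)},p_{t-},\vartheta_t,e)(V-\hat u_{(t,e)})N(\mathrm{d}t,\mathrm{d}e)]\ge0$. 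Admissibility of both $v$ is exactly what Lemma \ref{lemma551} secures, by guaranteeing $O\in\mathscr G\otimes\mathscr B(\mathcal E)$. Since $A\in\mathscr G$ is arbitrary, a standard localization then upgrades each integral inequality to the pointwise statements \eqref{contin} ($\nu$-a.s.) and \eqref{jump} ($\mu$-a.s.): were $\mathcal H_u(V-\hat u)$ negative on a set of positive $\nu$-measure, one could choose $A$ to contradict the displayed nonnegativity, and similarly under $\mu$.

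I expect the main obstacle to be precisely this measure-theoretic separation together with the localization. One must check that the two variations are truly admissible (hence the role of Lemma \ref{lemma551}), that the first iterated limit decouples cleanly into the $\lambda(\mathrm{d}e)\mathrm{d}t$ and $N(\mathrm{d}t,\mathrm{d}e)$ integrals with no surviving cross term, and that the class $\mathscr G$ is rich enough to localize separately under each singular measure. The control estimates and the convergence results of Lemma \ref{lemma422} handle the routine analytic bookkeeping, in particular discarding the $o((\epsilon_1^2+\epsilon_2^2)^{1/2})$ remainder and justifying the interchange of limits with expectation, so that what remains is essentially the singular-measure localization argument.
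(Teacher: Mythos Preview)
Your proposal is correct and follows essentially the same approach as the paper: derive $\hat J\ge -o((\epsilon_1^2+\epsilon_2^2)^{1/2})$ from optimality and Lemma~5, use the representation of $\hat J$ via It\^o's formula applied to $p_tx_t^1+q_ty_t^1$, take the two iterated limits to decouple the impulse part from the regular-control part, and then separate the continuous and jump contributions by the mutual singularity of $\mu$ and $\nu$ together with the localized variations built from Lemma~\ref{lemma551}. The paper carries out exactly this program in the paragraphs preceding the theorem statement, so your outline reproduces it faithfully.
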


\begin{remark1}
  {\rm i)} Compared with the results of classical predictable framework, \eqref{jump} is used to be characterize the optimal control at the jump time of Poisson random measure. On the other hand, \eqref{contin} looks similar to the classical result. But they are different because of the difference of dual equations. \eqref{impulse} is used to characterize optimal impulse control.\\
  {\rm ii)} The stochastic maximum principle \eqref{contin} and \eqref{jump} seem contradictory, but they hold almost everywhere under the mutually singular measures $\nu$ and $\mu$, respectively. So they won't happen at the same time.
\end{remark1}

Further, we can derive the following useful corollary.
\begin{corollary1}\label{corollary}
  Under the same assumptions of Theorem \ref{necessary}, we have
  \begin{equation}
    p_{\tau_i}G_{\tau_i}+q_{\tau_i}H_{\tau_i} +\psi_{\eta}(\tau_i,\hat{\eta}_i)=0, \qquad i\geq 1,\ a.s.
  \end{equation}
\end{corollary1}
\begin{proof}
  Similar to Corollary 3.1 in \cite{Wu2011}, it is easy to see that
  \begin{equation*}
    E\bigg[\sum_{i\geq 1}|p_{\tau_i}G_{\tau_i}+q_{\tau_i}H_{\tau_i} +\psi_{\eta}(\tau_i,\hat{\eta}_i)|^2\bigg]<\infty.
  \end{equation*}
  Define $M_i=p_{\tau_i}G_{\tau_i}+q_{\tau_i}H_{\tau_i} +\psi_{\eta}(\tau_i,\hat{\eta}_i)$. Let $\xi\in\mathcal{K}_{ad}$ be such that
  \begin{equation*}
    \xi_i=\left\{
    \begin{aligned}
      &\hat{\eta}_i-M_i,\qquad \text{if } M_i\geq 0,\\
      &\hat{\eta}_i,\qquad\qquad\ \,  \text{otherwise}.
    \end{aligned}
    \right.
  \end{equation*}
  Then it follows from \eqref{impulse} that
  \begin{equation*}
    E\bigg[\sum_{i\geq 1}(M_i)^2\mathbbm{1}_{\{M_i\geq 0\}}\bigg]=0.
  \end{equation*}
  Similarly, define $\xi\in\mathcal{K}_{ad}$ by
  \begin{equation*}
    \xi_i=\left\{
    \begin{aligned}
      &\hat{\eta}_i-M_i,\qquad \text{if } M_i\leq 0,\\
      &\hat{\eta}_i,\qquad\qquad\ \,  \text{otherwise}.
    \end{aligned}
    \right.
  \end{equation*}
  Then
  \begin{equation*}
    E\bigg[\sum_{i\geq 1}(M_i)^2\mathbbm{1}_{\{M_i\leq 0\}}\bigg]=0.
  \end{equation*}
  So we have $E[\sum_{i\geq 1}(M_i)^2]=0$, then the result follows.
\end{proof}

 Next we give the sufficient condition for optimal control.
\begin{theorem}\label{sufficient}
  Let Assumption {\rm (H)} hold. Assume that $\varphi, \phi, \psi, h, {\mathcal{H}}, \widetilde{{\mathcal{H}}}$ are convex with respect to $(x,y,z,u,\eta)$. Suppose that $({u},{\eta})\in\mathcal{A}$ is a given control, $({x},{y},{z}, k)$ is the corresponding solution of \eqref{state}, $(q,p,\theta,\vartheta)$ is the corresponding solution of \eqref{adjoint}. 
  Then $(u,\eta)$ is an optimal control if it satisfies \eqref{contin}-\eqref{impulse}.
\end{theorem}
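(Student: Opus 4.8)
The plan is to prove optimality by the standard duality-plus-convexity argument: fix an arbitrary competitor $(v,\zeta)\in\mathcal A$ with associated state $(x^{v},y^{v},z^{v},k^{v})$, write $\delta x=x^{v}-x$, $\delta y=y^{v}-y$, $\delta z=z^{v}-z$, $\delta u=v-u$, $\delta\eta_i=\zeta_i-\eta_i$, and show $J(v,\zeta)-J(u,\eta)\ge 0$. First I would split the cost increment into the two integral terms (the $l$- and $f$-parts) and the boundary terms ($\varphi$ at $T$, $\phi$ at $0$, and the impulse sum $\sum_i\psi$). For each boundary term I would apply convexity to pass to its first-order expansion, e.g. $\varphi(x^{v}_T,E[x^{v}_T])-\varphi(x_T,E[x_T])\ge\varphi_x(T)\delta x_T+\varphi_{\tilde x}(T)E[\delta x_T]$, $\phi(y^{v}_0)-\phi(y_0)\ge\phi_y(y_0)\delta y_0$, and $\psi(\tau_i,\zeta_i)-\psi(\tau_i,\eta_i)\ge\psi_\eta(\tau_i,\eta_i)\delta\eta_i$.

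Next I would rewrite the two running integrands through the Hamiltonians. Since $l=\mathcal H+qg-pb-\theta\sigma$ and $f=\widetilde{\mathcal H}-(p+\vartheta)\gamma-\vartheta c$, the increments $l^{v}-l^{u}$ and $f^{v}-f^{u}$ each split into a Hamiltonian increment plus increments of $g,b,\sigma$ (respectively $\gamma,c$) weighted by the adjoint processes $p,\theta,q,\vartheta$. The key step is then to apply It\^o's formula to $p_t\delta x_t+q_t\delta y_t$ on $[0,T]$ — exactly the computation that produced \eqref{cost222}, but now with the exact differences $\delta x,\delta y$ rather than the linearizations — using the dynamics \eqref{state} and \eqref{adjoint} together with Proposition \ref{2.2} and the $\check{\mathbb E}$-projections built into \eqref{adjoint} to compensate the $N$- and $\tilde N$-integrals correctly. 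This identity reproduces precisely the adjoint-weighted increments of $b,\sigma,g,\gamma,c$, which cancel against the corresponding terms coming from $l^{v}-l^{u}$ and $f^{v}-f^{u}$, and leaves the boundary contributions $E[p_T\delta x_T+q_T\delta y_T]$, $-E[q_0\delta y_0]$ (with $\delta x_0=0$, $q_0=-\phi_y(y_0)$) and the impulse sum $E\big[\sum_{i\ge1}(p_{\tau_i}G_{\tau_i}+q_{\tau_i}H_{\tau_i})\delta\eta_i\big]$.

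Collecting everything, the cost increment is bounded below by the two Hamiltonian remainders $E\big[\int_0^T\!\int_{\mathcal E}\big(\mathcal H^{v}-\mathcal H^{u}-\mathcal H_x\delta x-\mathcal H_y\delta y-\mathcal H_z\delta z-\mathcal H_u\delta u-(\text{mean-field terms})\big)\lambda(\mathrm d e)\mathrm d t\big]$ and its $\widetilde{\mathcal H}$-analogue integrated against $N(\mathrm d t,\mathrm d e)$, plus the three first-order control contributions $E[\int_0^T\!\int_{\mathcal E}\mathcal H_u\delta u\,\lambda(\mathrm d e)\mathrm d t]$, $E[\int_0^T\!\int_{\mathcal E}\widetilde{\mathcal H}_u\delta u\,N(\mathrm d t,\mathrm d e)]$ and $E[\sum_i(p_{\tau_i}G_{\tau_i}+q_{\tau_i}H_{\tau_i}+\psi_\eta(\tau_i,\eta_i))\delta\eta_i]$. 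By the joint convexity of $\mathcal H$ and $\widetilde{\mathcal H}$ in $(x,y,z,u)$ (the law-dependence handled through $E[\mathcal H_{\tilde x}\delta x]=E[\mathcal H_{\tilde x}]E[\delta x]$ and its analogues) the two remainder integrals are nonnegative; the three first-order terms are exactly the quantities integrated in \eqref{contin}, \eqref{jump} and \eqref{impulse} and are therefore nonnegative by hypothesis. Adding these gives $J(v,\zeta)\ge J(u,\eta)$, i.e. optimality.

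The hardest points will be, first, the measure bookkeeping in the It\^o step: because the running cost pairs with $\mathcal H$ through $\lambda(\mathrm d e)\mathrm d t$ while the jump cost pairs with $\widetilde{\mathcal H}$ through $N(\mathrm d t,\mathrm d e)$, one must make sure the compensation of Proposition \ref{2.2} and the projections $\check{\mathbb E}[\cdot]$ in \eqref{adjoint} route each term to the correct one of the mutually singular measures, so that \eqref{contin} ($\nu$-a.s.) governs the continuous part and \eqref{jump} ($\mu$-a.s.) the jump part with no overlap. Second, the terminal contribution $E[p_T\delta x_T+q_T\delta y_T]$ must be reconciled with $E[\varphi(x^v_T,E[x^v_T])-\varphi(x_T,E[x_T])]$: using the terminal conditions in \eqref{adjoint} this reduces, after convexity of $\varphi$, to the single term $E\big[q_T\big(\delta y_T-h_x(T)\delta x_T-h_{\tilde x}(T)E[\delta x_T]\big)\big]$, whose nonnegativity is where the convexity of $h$ must be used and where the sign of $q_T$ needs to be controlled; this reconciliation is the genuinely delicate step, the rest being routine estimates.
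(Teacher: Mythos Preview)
Your proposal follows essentially the same duality-plus-convexity route as the paper's own proof: convexity bounds on the terminal, initial and impulse terms, It\^o's formula applied to $p_t\delta x_t+q_t\delta y_t$ to cancel the adjoint-weighted increments of $b,\sigma,g,\gamma,c$, then convexity of $\mathcal H,\widetilde{\mathcal H}$ for the Hamiltonian remainders and the conditions \eqref{contin}--\eqref{impulse} for the first-order control terms. The terminal subtlety you flag---that convexity of $h$ alone does not yield the desired sign for $E\big[q_T\big(\delta y_T-h_x(T)\delta x_T-h_{\tilde x}(T)E[\delta x_T]\big)\big]$ without knowing the sign of $q_T$---is real; the paper's proof simply applies It\^o and passes directly to the final inequality without isolating or addressing this point, so your write-up is in fact more careful here than the paper itself.
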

\begin{proof}
  Let $(v,\xi)$ be an any admissible control, then we have
  \begin{equation*}
  \begin{aligned}
    J({u},{\eta})-J(v,\xi)&=E\bigg[\int_0^T\int_{\mathcal{E}}\delta l(t)\lambda(\mathrm{d}e)\mathrm{d}t +\int_0^T\int_{\mathcal{E}}\delta f(t)N(\mathrm{d}t,\mathrm{d}e)+\varphi({x}^{u,\eta}_T,E[{x}^{u,\eta}_T]) \\
    &\qquad-\varphi({x}^{v,\xi}_T,E[ {x}^{v,\xi}_T])+\phi({y}^{u,\eta}_0) -\phi(y^{v,\xi}_0) +\sum_{i\geq 1}(\psi(\tau_i,{\eta_i})-\psi(\tau_i,\xi_i))\bigg],
  \end{aligned}
  \end{equation*}
  where $\delta \kappa(t)=\kappa(t,{x}^{u,\eta}_t,E[{x}^{u,\eta}_t],{y}^{u,\eta}_t,E[{y}^{u,\eta}_t] ,{z}^{u,\eta}_t, E[{z}^{u,\eta}_t], u_{(t,e)},e)-\kappa(t,{x}^{v,\xi}_t,E[{x}^{v,\xi}_t], {y}^{v,\xi}_t, E[{y}^{v,\xi}_t]$, ${z}^{v,\xi}_t, E[{z}^{v,\xi}_t]$, $v_{(t,e)},e)$ with $\kappa=l,f$. From the convexity of $\varphi$, $\phi$ and $\psi$, we have
  \begin{equation*}
    \begin{aligned}
      &E\bigg[\varphi({x}^{u,\eta}_T,E[{x}^{u,\eta}_T])-\varphi({x}^{v,\xi}_T,E[ {x}^{v,\xi}_T])+\phi({y}^{u,\eta}_0) -\phi(y^{v,\xi}_0) +\sum_{i\geq 1}(\psi(\tau_i,{\eta_i})-\psi(\tau_i,\xi_i))\bigg]\\
      &\leq E\bigg[\varphi_x({x}^{u,\eta}_T,E[{x}^{u,\eta}_T])({x}^{u,\eta}_t-x^{v,\xi}_t)+\varphi_{\tilde{x}} ({x}^{u,\eta}_T,E[{x}^{u,\eta}_T])E[{x}^{u,\eta}_t-x^{v,\xi}_t]\\
      &\qquad\qquad\qquad\qquad\qquad\quad+\phi_y({y}^{u,\eta}_0)({y}^{u,\eta}_0-y^{v,\xi}_0) +\sum_{i\geq 1}\psi_{\eta}(\tau_i, {\eta}_i)({\eta}_i-\xi_i) \bigg].
    \end{aligned}
  \end{equation*}
  Applying It\^o's formula to $p_t({x}^{u,\eta}_t-x^{v,\xi}_t)+q_t({y}^{u,\eta}_t-y^{v,\xi}_t)$, noticing that
  \begin{eqnarray*}
    \begin{aligned}
      {E}\bigg[\int_0^T\Big(p_tG_t+q_tH_t\Big)\mathrm{d}(\eta_t-\xi_t)\bigg]={E}\bigg[\sum_{i\geq 1}(p_{\tau_i}G_{\tau_i}+q_{\tau_i}H_{\tau_i})({\eta}_i-\xi_i)\bigg].
    \end{aligned}
  \end{eqnarray*}
  Since ${\mathcal{H}},\widetilde{{\mathcal{H}}}$ are convex function, we have
  \begin{equation*}
    \begin{aligned}
      &J({u},{\eta})-J(v,\xi)\\
      &\leq E\bigg[\int_0^T\int_{\mathcal{E}}\Big\{\delta \mathcal{H}(t)- \mathcal{H}_x(t)(x_t^{u,\eta}-x^{v,\xi}_t)-\mathcal{H}_{\tilde x}(t)E[x_t^{u,\eta}-x^{v,\xi}_t]-\mathcal{H}_y(t)(y_t^{u,\eta}-y^{v,\xi}_t)\\
      &\qquad-\mathcal{H}_{\tilde y}(t)E[y_t^{u,\eta}-y^{v,\xi}_t] -\mathcal{H}_z(t)(z_t^{u,\eta}-z^{v,\xi}_t)-\mathcal{H}_{\tilde z}(t)E[z_t^{u,\eta}-z^{v,\xi}_t]\Big\}\lambda(\mathrm{d}e)\mathrm{d}t\\
      &\qquad+\int_0^T\int_{\mathcal{E}}\Big\{\delta \widetilde{\mathcal{H}}(t)- \widetilde{\mathcal{H}}_x(t)(x_{t-}^{u,\eta}-x^{v,\xi}_{t-})-\widetilde{\mathcal{H}}_{\tilde x}(t)E[x_{t-}^{u,\eta}-x^{v,\xi}_{t-}]
      -\widetilde{\mathcal{H}}_y(t)(y_{t-}^{u,\eta}-y^{v,\xi}_{t-})\\
      &\qquad-\widetilde{\mathcal{H}}_{\tilde y}(t)E[y_{t-}^{u,\eta}-y^{v,\xi}_{t-}]-\widetilde{\mathcal{H}}_z(t)(z_t^{u,\eta}-z^{v,\xi}_t)
      -\widetilde{\mathcal{H}}_{\tilde z}(t)E[z_t^{u,\eta}-z^{v,\xi}_t]\Big\}N(\mathrm{d}t,\mathrm{d}e)\bigg]\\
      &\qquad +E\bigg[\sum_{i\geq 1}(p_{\tau_i}G_{\tau_i}+q_{\tau_i}H_{\tau_i}+\psi_{\eta}(\tau_i, {\eta}_i))({\eta}_i-\xi_i)\bigg]\\
      &\leq E\bigg[\int_0^T\int_{\mathcal{E}}\mathcal{H}_u(t)(u_t-v_t)\lambda(\mathrm{d}e)\mathrm{d}t\bigg]
      +E\bigg[\int_0^T\int_{\mathcal{E}}\widetilde{\mathcal{H}}_u(t)(u_t-v_t)N(\mathrm{d}t,\mathrm{d}e)\bigg]\\
      &\qquad +E\bigg[\sum_{i\geq 1}(p_{\tau_i}G_{\tau_i}+q_{\tau_i} H_{\tau_i}+\psi_{\eta}(\tau_i, {\eta}_i))({\eta}_i-\xi_i)\bigg]
      \leq 0,
    \end{aligned}
  \end{equation*}
  where $\delta \kappa(t)=\kappa(t,{x}^{u,\eta}_t,{y}^{u,\eta}_t,{z}^{u,\eta}_t,{u}_{(t,e)},p_t, \theta_t,q_t,e)-\kappa(t,{x}^{v,\xi}_t,{y}^{v,\xi}_t,{z}^{v,\xi}_t,{v}_{(t,e)},p_t,\theta_t,q_t,e)$ and $\kappa_\varsigma(t)$ $=\kappa_\varsigma(t,{x}^{u,\eta}_t,{y}^{u,\eta}_t,{z}^{u,\eta}_t,{u}_{(t,e)},p_t, \theta_t,q_t,e)$ with $\kappa=\mathcal{H},\widetilde{\mathcal{H}}$ and $\varsigma=x,\tilde{x},y,\tilde{y},z,\tilde{z},u$. This implies that $(u,\eta)$ is the optimal control.
\end{proof}

\section{Application in LQ case}
In this section, we present two examples. One is a general LQ example to show how to find the optimal control and the related feedback representation form by our results. The other is a special example to illustrate that compared with predictable control, the progressive optimal control can indeed make the cost functional get smaller value in some cases. 
Now, we first introduce the corresponding dynamic of the LQ problem and consider the following
MF-FBSDEP
\begin{equation}
  \left\{
    \begin{aligned}
      \mathrm{d}x_t&=\int_{\mathcal{E}}\Big\{A_{1,t}x_{t}+B_{1,t}E[x_{t}]+C_{1,t}u_{(t,e)}\Big\}\lambda(\mathrm{d}e) \mathrm{d}t+\int_{\mathcal{E}}\Big\{A_{2,t}x_{t}+B_{2,t}E[x_{t}]\\
      &\qquad+C_{2,t}u_{(t,e)}\Big\}\lambda(\mathrm{d}e) \mathrm{d}B_t+\int_{\mathcal{E}}\Big\{A_{3,t}x_{t-}+B_{3,t} E[x_{t-}]+C_{3,t}u_{(t,e)}\Big\}N(\mathrm{d}t, \mathrm{d}e)\\
      &\qquad +\int_{\mathcal{E}}\Big\{A_{4,t}x_{t-}+B_{4,t} E[x_{t-}]+C_{4,t}u_{(t,e)}\Big\}\tilde N(\mathrm{d}t, \mathrm{d}e),\\
      \mathrm{d}y_t&=-\int_{\mathcal{E}}\Big\{A_{5,t}x_t+B_{5,t}E[x_t]+F_{1,t}y_t+\tilde F_{1,t}E[y_t]+K_{t} z_t -K_{t}E[z_t]\\
      &\qquad+C_{5,t}u_{(t, e)}\Big\}\lambda(\mathrm{d}e)\mathrm{d}t+z_t\mathrm{d}B_t+\int_{\mathcal{E}}k_t\tilde N(\mathrm{d}t,\mathrm{d}e)+H_t\mathrm{d}\eta_t,\\
      x_0&=x_0,\qquad y_T=Mx_T.
    \end{aligned}
  \right.
\end{equation}
Here, all coefficients $F_1,\bar{F}_1, A_i,B_i,C_i, i=1,2,\cdots,5$ are deterministic functions satisfying the Assumption (H1) and $(u,\eta)\in \mathcal{A}$ with $U=R$. The cost functional is
\begin{equation}
  \begin{aligned}
    J(u,\eta)&=E\bigg[\int_0^T\int_{\mathcal{E}}\Big\{A_{6,t}x_t+B_{6,t}E[x_t]+F_{2,t}y_t+\tilde F_{2,t}E[y_t]+\frac{1}{2}u^2_{(t, e)}\Big\}\lambda(\mathrm{d}e)\mathrm{d}t+\int_0^T\int_{\mathcal{E}}\Big\{A_{7,t}x_{t-} \\
    &\qquad+B_{7,t}E[x_{t-}]+F_{3,t}y_{t-} +\tilde F_{3,t}E[y_{t-}]  +\frac{1}{2}u^2_{(t, e)}\Big\}N(\mathrm{d}t,\mathrm{d}e)+\frac{\delta}{2}|E[x_T]|^2+\frac{1}{2}y_0^2+\frac{1}{2} \sum_{i\geq 1}\eta_i^2\bigg],
  \end{aligned}
\end{equation}
where $\delta>0$ and all coefficients $A_i,B_i,F_j,\bar{F}_j,i=6,7,j=2,3$ are deterministic functions satisfying the Assumption (H2). We now use the Theorem \ref{necessary} and Corollary \ref{corollary} to solve the Problem (OCP).
From the necessary stochastic maximum principle, we know that if $(\hat{u},\hat{\eta})$ is optimal control, then it must satisfy
\begin{flalign}
    \hat{u}_{(t,e)}&=q_tC_{5,t}-p_tC_{1,t}-\theta_tC_{2,t},\qquad \nu\text{-a.s.} \label{3388}\\
    \hat{u}_{(t,e)}&=-p_{t-}C_{3,t}-\vartheta_tC_{3,t}-\vartheta_tC_{4,t},\qquad \mu\text{-a.s.} \label{3399}\\
    \hat{\eta}_i&=-q_{\tau_i}H_{\tau_i},\qquad P\text{-a.s.},\label{4400}
\end{flalign}
where $(q,p,\theta,\vartheta)$ are adjoint processes given by the following MF-FBSDEP
\begin{equation}\label{FBSDE111}
  \left\{
    \begin{aligned}
      \mathrm{d}q_t&=\int_{\mathcal{E}}\Big\{F_{1,t}q_t-F_{2,t}-F_{3,t}+\tilde F_{1,t}E[q_t]-\tilde F_{2,t} -\tilde F_{3,t}\Big\}\lambda(\mathrm{d}e)\mathrm{d}t +\int_{\mathcal{E}}\Big\{K_{t}q_t-K_{t} E[q_t]\Big\}\lambda(\mathrm{d}e)\mathrm{d}B_t,\\
      \mathrm{d}p_t&=-\int_{\mathcal{E}}\Big\{p_tA_{1,t}+\theta_tA_{2,t}-q_tA_{5,t}+A_{6,t}+(p_t+\vartheta_t) A_{3,t}+\vartheta_tA_{4,t}+A_{7,t}+E[p_tB_{1,t}-q_tB_{5,t}\\
      &\qquad+\theta_tB_{2,t}+B_{6,t}+(p_t+ \vartheta_t)B_{3,t}+\vartheta_tB_{4,t}+B_{7,t}]\Big\}\lambda(\mathrm{d}e)\mathrm{d}t+\theta_t\mathrm{d}B_t+\int_{\mathcal{E}}\vartheta_t\tilde{N}(\mathrm{d}t,\mathrm{d}e),\\
      q_0&=-\hat{y}_0,\qquad p_T=\delta E[\hat{x}_T]-Mq_T.
    \end{aligned}
  \right.
\end{equation}
Then we show that $(\hat{u},\hat{\eta})$ given by \eqref{3388}-\eqref{4400} is indeed the optimal control. Firstly, for any admissible control $(u,\eta)\in \mathcal{A}$, the total variation of the cost is
\begin{equation*}
  \begin{aligned}
    J(u,\eta)-J(\hat{u},\hat{\eta})=\frac{1}{2}E\bigg[\int_0^T\int_{\mathcal{E}} \big(u_{(t,e)}&-\hat{u}_{(t,e)}\big)^2\mathrm{d}t+\int_0^T\int_{\mathcal{E}}\big(u_{(t,e)}-\hat{u}_{(t,e)}\big)^2N(\mathrm{d}t,\mathrm{d}e)\\
     &+{\delta}\big(E[x_T-\hat{x}_T]\big)^2+\big(y_0-\hat{y}_0\big)^2+\sum_{i\geq 1}\big(\eta_i-\hat{\eta}_i\big)^2\bigg]+II,
  \end{aligned}
\end{equation*}
where
\begin{equation*}
    \begin{aligned}
      II&=E\bigg[\int_0^T\int_\mathcal{E}\Big\{A_{6,t}(x_t-\hat{x}_t)+B_{6,t}E[x_t-\hat{x}_t]+F_{2,t}(y_t-\hat{y}_t)+\bar{F}_{2,t}E[y_t-\hat{y}_t]+\hat{u}_{(t,e)}({u}_{(t,e)}-\hat{u}_{(t,e)})\Big\}\lambda(\mathrm{d}e)\mathrm{d}t\bigg]\\
      &\quad +E\bigg[\int_0^T\int_\mathcal{E}\Big\{A_{7,t}(x_{t-}-\hat{x}_{t-})+B_{7,t}E[x_{t-}-\hat{x}_{t-}]+F_{3,t}(y_{t-}-\hat{y}_{t-})+\bar{F}_{3,t}E[y_{t-}-\hat{y}_{t-}]\\
      &\qquad\quad+\hat{u}_{(t,e)}({u}_{(t,e)}-\hat{u}_{(t,e)})\Big\}N(\mathrm{d}t,\mathrm{d}e)+{\delta}E[\hat{x}_T](E[x_T-\hat{x}_T])+\hat{y}_0(y_0-\hat{y}_0)+\sum_{i\geq 1}\hat{\eta}_i(\eta_i-\hat{\eta}_i)\bigg].\\
    \end{aligned}
\end{equation*}
Nextly, we would like to prove that $II=0$. Applying It\^o's formula to $p_t(x_t-\hat{x}_t)+q_t(y_t-\hat{y}_t)$, we have
\begin{equation*}
    \begin{aligned}
      &E\Big[\delta E[\hat{x}_T](E[x_T-\hat{x}_T])+\hat{y}_0(y_0-\hat{y}_0)\Big]\\&=E\bigg[\int_0^T\int_\mathcal{E}\Big\{[C_{1,t}p_t+C_{4,t}p_t-C_{5,t}q_t](u_{(t,e)}-\hat{u}_{(t,e)})-A_{6,t}(x_t-\hat{x}_t)-B_{6,t}E[x_t-\hat{x}_t]-F_{2,t}(y_t-\hat{y}_t)\\
      &\qquad-\bar{F}_{2,t}E[y_t-\hat{y}_t]\Big\}\lambda(\mathcal{E})\mathrm{d}t +\int_0^T\int_\mathcal{E}\Big\{[C_{3,t}(p_{t-}+\vartheta_t)+C_{4,t}\vartheta_t](u_{(t,e)}-\hat{u}_{(t,e)})-A_{7,t}(x_{t-}-\hat{x}_{t-})\\
      &\qquad-B_{7,t}E[x_{t-}-\hat{x}_{t-}]-F_{3,t}(y_{t-}-\hat{y}_{t-})-\bar{F}_{3,t}E[y_{t-}-\hat{y}_{t-}]\Big\}N(\mathrm{d}t,\mathrm{d}e)+\int_0^Tq_tH_t\mathrm{d}(\eta_t-\hat{\eta}_t) \bigg],
    \end{aligned}
\end{equation*}
which implies that
\begin{equation*}
    \begin{aligned}
      II&=E\bigg[\int_0^T\int_\mathcal{E}\Big[C_{1,t}p_t+C_{4,t}p_t-C_{5,t}q_t+\hat{u}_{(t,e)}\Big](u_{(t,e)}-\hat{u}_{(t,e)})\lambda(\mathrm{d}e)\mathrm{d}t+\sum_{i\geq 1}(q_{\tau_i}H_{\tau_i}+\hat{\eta}_i)(\eta_i-\hat{\eta}_i)\\
      &\qquad+\int_0^T\int_\mathcal{E}\Big[C_{3,t}(p_{t-}+\vartheta_t)+C_{4,t}\vartheta_t+\hat{u}_{(t,e)}\Big](u_{(t,e)}-\hat{u}_{(t,e)})N(\mathrm{d}t,\mathrm{d}e)\bigg]=0.
    \end{aligned}
\end{equation*}
Therefore, $J(u,\eta)-J(\hat{u},\hat{\eta})\geq 0$, $(\hat{u},\hat{\eta})$ is indeed the optimal control. In addition, we want to get the feedback of optimal control $(\hat{u},\hat{\eta})$.
By the classical SDE theorem, we can get the following explicit solution of the first equation of FBSDE \eqref{FBSDE111}.
\begin{equation}\label{qqqt}
  q_t=-\hat{y}_0 e^{\int_0^t(F_{1,s}+\bar{F}_{1,s})\lambda(\mathcal{E}) \mathrm{d}s}-\int_0^t(F_{2,s}+\bar{F}_{2,s}+F_{3,s}+\bar{F}_{3,s})e^{\int_s^t( F_{1,r}+\bar{F}_{1,r})\lambda(\mathcal{E})\mathrm{d}r}\mathrm{d}s.
\end{equation}
Moreover, noticing the second equation of \eqref{FBSDE111} is a BSDEP with $q$, when we get the explicit express of $q$, it has unique adapted solution $(p,\theta,\vartheta)\in S^2[0,T]\times M^2[0,T]\times F^2[0,T]$ by Lemma \ref{BSDEsolution}. In order to get the explicit form of solution $(p,\theta,\vartheta)$, we define the following ordinary differential equation (ODE)
\begin{equation}
  \left\{
    \begin{aligned}
      \mathrm{d}p_t&=-\int_{\mathcal{E}}\Big\{\pi_tp_t+\Delta_t\Big\}\lambda(\mathrm{d}e)\mathrm{d}t,\\
      p_T&=\delta E[\hat{x}_T]-Mq_T,
    \end{aligned}
  \right.
\end{equation}
where $\pi_t=A_{1,t}+A_{3,t}+B_{1,t}+B_{3,t}$ and $\Delta_t=A_{6,t}+A_{7,t}+B_{6,t}+B_{7,t}-q_t(A_{5,t}+B_{5,t})$. We can know that $(p,0,0)$ is the unique solution of the BSDE in \eqref{FBSDE111} by classical BSDE theory.
Let $p_t=\Pi_tE[\hat{x}_t]+\Sigma_t$, and apply It\^o's formula to $p_t=\Pi_tE[\hat{x}_t]+\Sigma_t$, we have
\begin{equation}
    \begin{aligned}
      \mathrm{d}p_t=\int_{\mathcal{E}}\Big\{\Big(\frac{\dot{\Pi}_t}{\lambda(\mathcal{E})}&+\Pi_t\pi_t-\Pi_t^2(C_{1,t}^2+C_{3,t}^2)\Big)E[\hat{x}_t]+\Pi_tC_{1,t} C_{5,t}q_t\\
      &+\frac{\dot{\Sigma}_t}{\lambda(\mathcal{E})}-\Pi_t(C_{1,t}^2+C_{3,t}^2)\Sigma_t\Big\}\lambda(\mathrm{d}e)\mathrm{d}t.
    \end{aligned}
\end{equation}
Comparing the coefficients of above equation with the second equation of \eqref{FBSDE111}, we have the following Riccati equation
\begin{equation}
    \left\{
        \begin{aligned}
          &\dot{\Pi}_t+2\lambda(\mathcal{E})\pi_t\Pi_t-\lambda(\mathcal{E})(C_{1,t}^2+C_{3,t}^2)\Pi_t^2=0,\\
          &\Pi_T=\delta,
        \end{aligned}
    \right.
\end{equation}
and ODE
\begin{equation}
    \left\{
        \begin{aligned}
          &\dot{\Sigma}_t+\big[\pi_t-(C_{1,t}^2+C_{3,t}^2)\Pi_t\big]\lambda(\mathcal{E})\Sigma_t+\Pi_tC_{1,t}C_{5,t}q_t\lambda(\mathcal{E})+\Delta_t\lambda(\mathcal{E})=0,\\
          &\Sigma_T=-Mq_T,
        \end{aligned}
    \right.
\end{equation}
 Solving above two equations, we have
\begin{equation}\label{PPPt}
    \Pi_t=\frac{1}{\delta^{-1}e^{-2\int_t^T\pi_s\lambda(\mathcal{E})\mathrm{d}s}+\int_t^T\lambda(\mathcal{E})(C_{1,t}^2+C_{3,t}^2)e^{-2\int_t^s\pi_r\lambda(\mathcal{E})\mathrm{d}r}\mathrm{d}s},
\end{equation}
and
\begin{equation}\label{SSSt}
    \Sigma_t=-Mq_Te^{\int_t^T[\pi_s-(C_{1,s}^2+C_{3,s}^2)\Pi_s]\lambda(\mathcal{E})\mathrm{d}s}+\int_t^T(\Pi_sC_{1,s}C_{5,s}q_s+\Delta_s)e^{-\int_t^s[(C_{1,r}^2+C_{3,r}^2)\Pi_r-\pi_r]\lambda(\mathcal{E})\mathrm{d}r}\mathrm{d}s.
\end{equation}
Therefore, the optimal control is
\begin{equation*}
  \begin{aligned}
    \hat{u}_{(t,e)}&=C_{5,t}q_t-C_{1,t}\Pi_tE[\hat{X}_t]-C_{1,t}\Sigma_t,\qquad \nu\text{-a.s.}, \\
    \hat{u}_{(t,e)}&=-C_{3,t}\Pi_tE[\hat{X}_t]-C_{3,t}\Sigma_t,\qquad \mu\text{-a.s.}, \\
    \hat{\eta}_i&=-q_{\tau_i}H_{\tau_i},
  \end{aligned}
\end{equation*}
where $q$, $\Pi$ and $\Sigma$ is give by \eqref{qqqt}, \eqref{PPPt} and \eqref{SSSt}, respectively.

Next we present the second example. Because we would like to highlight the characteristics of the jump time, we consider the following forward-backward control system without impulse control,
\begin{equation}\label{1177}
  \left\{
    \begin{aligned}
      \mathrm{d}X_t&=\int_{\mathcal{E}} u_{(t,e)}\lambda(\mathrm{d}e)\mathrm{d}t+\int_{\mathcal{E}} u_{(t,e)}N(\mathrm{d}t, \mathrm{d}e)-\int_{\mathcal{E}}u_{(t,e)}\tilde{N}(\mathrm{d}t,\mathrm{d}e),\qquad X_0=1,\\
      \mathrm{d}Y_t&=-\int_{\mathcal{E}}u_{(t,e)}^2\lambda(\mathrm{d}e)\mathrm{d}t+Z_t\mathrm{d}B_t+\int_{\mathcal{E}} K_t \tilde{N}(\mathrm{d}t,\mathrm{d}e),\qquad Y_1=X_1^2.\\
    \end{aligned}
  \right.
\end{equation}
The cost functional is
\begin{equation}
  \mathcal J(u)=E\bigg[\frac{1}{2}\int_0^1\int_{\mathcal{E}}u^2_{(t,e)}\lambda(\mathrm{d}e)\mathrm{d}t+2\int_0^1\int_{\mathcal{E}} u_{(t,e)}^2 N(\mathrm{d}t,\mathrm{d}e)+\frac{1}{2}X_1^2+\frac{1}{2}Y_0\bigg].
\end{equation}
Firstly, we will show that how to find the optimal control in the progressive
structure. Suppose that $u$ is the optimal control, we have the following equations by Theorem \ref{necessary},
\begin{flalign}
    p_t-2q_tu_{(t,e)}+u_{(t,e)}&=0,\qquad \mu\text{-a.s.},\label{u111}\\
    p_{t-}+4u_{(t,e)}&=0,\qquad \nu\text{-a.s.}.\label{u222}
\end{flalign}
According to \eqref{adjoint}, we obtain $q_t\equiv-\frac{1}{2}$. Then substituting \eqref{u111} into the term of $\mathrm{d}t$ and substituting \eqref{u222} into the terms of $N(\mathrm{d}t,\mathrm{d}e)$ and $\tilde N(\mathrm{d}t,\mathrm{d}e)$, we have
\begin{equation*}
  \begin{aligned}
    X_t&=1-\int_0^t\frac{p_s}{2}\lambda({\mathcal{E}})\mathrm{d}s-\int_0^t\frac{p_{s-}}{4}N(\mathrm{d}s,{\mathcal{E}}) +\int_0^t \frac{p_{s-}}{4}\tilde{N}(\mathrm{d}s,{\mathcal{E}}) =1-\int_0^t\frac{3}{4}p_s\lambda({\mathcal{E}})\mathrm{d}s.
  \end{aligned}
\end{equation*}
The second equation is due to the fact that $p_{t-}$ is predictable
\begin{equation*}
  \int_0^t\frac{p_{s-}}{4}\tilde N(\mathrm{d}s,{\mathcal{E}})=\int_0^t\frac{p_{s-}}{4}N(\mathrm{d}s,{\mathcal{E}}) -\int_0^t\frac{p_{s}}{4}\lambda({\mathcal{E}})\mathrm{d}s.
\end{equation*}
Setting $a:=-\frac{3}{4}\lambda({\mathcal{E}})$, we have
\begin{equation}\label{FBSDE2222}
  \left\{
    \begin{aligned}
      X_t&=1+\int_0^tap_s\mathrm{d}s\\
      p_t&=2X_1-\int_t^T \theta_s\mathrm{d}B_s-\int_t^T\int_{\mathcal{E}}\vartheta_s\tilde{N}(\mathrm{d}s, \mathrm{d}e)
    \end{aligned}
  \right.
\end{equation}
Obviously, $\theta=\vartheta=0$ is the solution of \eqref{FBSDE2222}. Let $p_t=\tilde P_tX_t$ with a continuous differentiable function $\tilde P_t$. Applying It\^o's formula to $p_t$ and comparing the coefficients, we have
\begin{equation}
    \begin{aligned}
      &\dot{\tilde P}_t+a\tilde P_t^2=0,\qquad
      \tilde P_1=2.
    \end{aligned}
\end{equation}
Noting that $a<0$, we obtain the solution of the above ODE is $\tilde P_t=\frac{2}{2at-2a+1}, t\in[0,1]$. Then the optimal control is
\begin{equation}
  u_{(t,e)}=-\frac{\tilde P_t}{2}X_t,\qquad \mu\text{-a.s.};\qquad\text{and}\qquad u_{(t,e)}=-\frac{\tilde P_t}{4}X_{t-},\qquad \nu\text{-a.s.}.
\end{equation}
Now we show that $u$ is an optimal control. For any control $v\in\mathcal{U}_{ad}$, applying It\^o's formula to $P_tx_t^2$, then
\begin{equation*}
  \begin{aligned}
    2X_1^2=P_0&+\int_0^1\dot{\tilde P}_tX_t^2\mathrm{d}t+\int_0^1\int_{\mathcal{E}}2\tilde P_tX_tv_{(t,e)}\lambda(\mathrm{d}e) \mathrm{d}t
    +\int_0^1\int_{\mathcal{E}}2\tilde P_tX_{t-}v_{(t,e)}N(\mathrm{d}t,\mathrm{d}e)\\
    &-\int_0^1\int_{\mathcal{E}} 2\tilde P_tX_{t-}v_{(t,e)} \tilde N(\mathrm{d}t,\mathrm{d}e).
  \end{aligned}
\end{equation*}
By \eqref{1177}, we have
\begin{equation}\label{2266}
  Y_0=X_1^2+\int_0^1\int_{\mathcal{E}}v_{(s,e)}^2\lambda(\mathrm{d}e)\mathrm{d}t-\int_0^1Z_t\mathrm{d}B_t-\int_0^1 \int_{\mathcal{E}} K_t\tilde{N}(\mathrm{d}t,\mathrm{d}e).
\end{equation}
Therefore the cost functional becomes
\begin{equation*}
  \begin{aligned}
    \mathcal J(v)&=E\bigg[\int_0^1\int_{\mathcal{E}}v^2_{(t,e)}\lambda(\mathrm{d}e)\mathrm{d}t+2\int_0^1\int_{\mathcal{E}} v_{(t,e)}^2 N(\mathrm{d}t,\mathrm{d}e)+X_1^2\bigg]\\
    &=\frac{1}{2}\tilde P_0+E\bigg[\int_0^1\int_{\mathcal{E}}\Big\{v^2_{(t,e)}+\frac{1}{2\lambda({\mathcal{E}})}\dot{\tilde P}_tX_t^2+\tilde P_tX_t v_{(t,e)} \Big\}\lambda(\mathrm{d}e)\mathrm{d}t  +\int_0^1\int_{\mathcal{E}}\Big\{2v_{(t,e)}^2+\tilde P_tX_{t-} v_{(t,e)}\Big\} N(\mathrm{d}t,\mathrm{d}e)\bigg]\\
    &=\frac{1}{2}\tilde P_0+E\bigg[\int_0^1\int_{\mathcal{E}}\Big(v_{(t,e)}+\frac{1}{2}\tilde P_tX_t\Big)^2\lambda(\mathrm{d}e) \mathrm{d}t+2\int_0^1\int_{\mathcal{E}}\Big(v_t+\frac{1}{4}\tilde P_tX_{t-}\Big)^2N(\mathrm{d}t,\mathrm{d}e)\bigg]\\
    &\geq \frac{1}{2}\tilde P_0.
  \end{aligned}
\end{equation*}
So $u$ is the optimal control and
\begin{equation*}
  \mathcal J(u)=\frac{1}{2}\tilde P_0=\frac{2}{3\lambda({\mathcal{E}})+2}.
\end{equation*}
Next we will show that how to find the optimal control in the predictable structure. Suppose $u\in\mathcal{U}_{ad}$ is predictable, then the system becomes
\begin{equation*}
  \left\{
    \begin{aligned}
      \mathrm{d}X_t&=2\int_{\mathcal{E}} u_{(t,e)}\lambda(\mathrm{d}e)\mathrm{d}t,\qquad X_0=1,\\
      \mathrm{d}Y_t&=-\int_{\mathcal{E}}u_{(t,e)}^2\lambda(\mathrm{d}e)\mathrm{d}t+Z_t\mathrm{d}B_t+\int_{\mathcal{E}} K_t \tilde{N}(\mathrm{d}t,\mathrm{d}e),\qquad Y_1=X_1^2,\\
    \end{aligned}
  \right.
\end{equation*}
and the cost functional becomes
\begin{equation*}
  \mathcal J(u)=E\bigg[\frac{5}{2}\int_0^1\int_{\mathcal{E}}u^2_{(t,e)}\lambda(\mathrm{d}e)\mathrm{d}t+\frac{1}{2}X_1^2 +\frac{1}{2}Y_0\bigg].
\end{equation*}
Then we know that the optimal control $u$ satisfies the following equation.
\begin{equation*}
  2p_t+6u_{(t,e)}=0.
\end{equation*}
By the same method, we also assume that $p_t=Q_tx_t$ with a continuous differentiable function $Q_t$. Then we have
\begin{equation*}
    \begin{aligned}
      &\dot{\tilde Q}_t-\frac{2\lambda({\mathcal{E}})}{3}\tilde Q_t^2=0,\qquad
      \tilde Q_1=2.
    \end{aligned}
\end{equation*}
We can obtain the solution of the above ODE is $\tilde Q_t=\frac{6}{4(1-t)\lambda({\mathcal{E}})+3}, t\in[0,1]$. For any predictable control $v\in\mathcal{U}_{ad}$, applying It\^o's formula to $\tilde Q_tX_t^2$, we obtain
\begin{equation*}
  2X_1^2=\tilde Q_0+\int_0^1\dot{\tilde Q}_tx_t^2\mathrm{d}t+4\int_0^1\int_{\mathcal{E}}\tilde Q_tx_tv_{(t,e)}\lambda(\mathrm{d}e) \mathrm{d}t.
\end{equation*}
By \eqref{2266}, we have
\begin{equation*}
  \begin{aligned}
    \mathcal J(v)&=E\bigg[\frac{5}{2}\int_0^1\int_{\mathcal{E}}u^2_{(t,e)}\lambda(\mathrm{d}e)\mathrm{d}t+\frac{1}{2}X_1^2 +\frac{1}{2}Y_0\bigg]\\
    &=\frac{1}{2}\tilde Q_0+3E\bigg[\int_0^1\int_{\mathcal{E}}\Big(v_{(t,e)}+\frac{1}{3}\tilde Q_tX_t\Big)^2\lambda(\mathrm{d}e) \mathrm{d}t\bigg]\\
    &\geq \frac{1}{2}\tilde Q_0.
  \end{aligned}
\end{equation*}
So the optimal control and corresponding cost functional are as following
\begin{equation*}
  u_{(t,e)}=-\frac{1}{3}\tilde Q_tX_t,\qquad\text{and}\qquad \mathcal J(u)=\frac{1}{2}\tilde Q_0=\frac{3}{4\lambda({\mathcal{E}})+3}.
\end{equation*}
Obviously, optimal control in progressive structure is better than that in predictable structure.

\begin{remark1}
  From the above example, we can see that it is very important to determine the value of control at the jump time in the progressive structure. On the contrary, due to the predictability of control, its behavior at the jump time can be ignored in the predictable structure.
\end{remark1}

\section{Conclusion}
In this paper, we derive necessary and sufficient stochastic maximum principle for mean-field forward-backward system involving random jumps and impulse control in progressive structure. Our stochastic maximum principle include three parts, the first is the continuous part, the second is the jump part and the last in the impulse part. Compared with the predictable structure, we characterize the characteristics of optimal control at jump time more accurately. Our stochastic maximum principle fully reflects the difference between jump process and continuous Brownian motion. As an application, the related LQ optimal control problem is considered. Moreover, Under some basic assumptions, we give the optimal control and corresponding state feedback representation.

\appendix

\section{Proof of Lemma \ref{SDEsolution-estimate}}
\begin{proof}
  Since the proof of the existence and uniqueness of mean-field SDEP \eqref{SDE} is similar to the Appendix A in \cite{Song2020} and Proposition 4.1 in \cite{Buckdahn2009}, we only give the proof of $L^2$ estimation.

  First, we prove that the estimation holds when $T$ is sufficiently small. Define mappings $\mathscr{T}^i$ as following
  \begin{equation*}
    \begin{aligned}
      \mathscr{T}^i(X^i)_t=x_0^i
      &+\int_0^t\int_{\mathcal{E}} b^i(s,X_s^i,E[X_s^i],e)\lambda(\mathrm{d}e)\mathrm{d}s+\int_0^t \int_{\mathcal{E}} \sigma^i(s,X_s^i,E[X_s^i],e)\lambda(\mathrm{d}e)\mathrm{d}B_s\\
      &+\int_0^t\int_{\mathcal{E}} \gamma^i(s,X_{s-}^i,E[X_{s-}^i],e)N(\mathrm{d}t,\mathrm{d}e) +\int_0^t\int_{\mathcal{E}} c^i(s,X_{s-}^i,E[X_{s-}^i],e)\tilde{N}(\mathrm{d}t,\mathrm{d}e).
    \end{aligned}
  \end{equation*}
  By the same argument of Theorem 2.4 in \cite{Song2021}, we can know that the mappings $\mathscr{T}^i$ is well-defined and is a contraction mapping. Then we have
  \begin{equation*}
    \begin{aligned}
      \|X^1-X^2\|^2&=\|\mathscr{T}^1(X^1)-\mathscr{T}^2(X^2)\|^2=\|\mathscr{T}^1(X^1)-\mathscr{T}^2(X^1) +\mathscr{T}^2(X^1)-\mathscr{T}^2(X^2)\|^2\\
      &\leq 2(\|\mathscr{T}^1(X^1)-\mathscr{T}^2(X^1)\|^2+\|\mathscr{T}^2(X^1)-\mathscr{T}^2(X^2)\|^2)\\
      &\leq 2\|\mathscr{T}^1(X^1)-\mathscr{T}^2(X^1)\|^2+2C(T)\|X^1-X^2\|^2.
    \end{aligned}
  \end{equation*}
  Choosing a sufficiently small $T$ such that $2C(T)<1$, noting that $\|X^1-X^2\|^2<\infty$, we get
  \begin{equation*}
    \|X^1-X^2\|^2\leq \frac{2}{1-2C(T)}\|\mathscr{T}^1(X^1)-\mathscr{T}^2(X^2)\|^2.
  \end{equation*}
  We also obtain
  \begin{equation*}
    \begin{aligned}
      \|\mathscr{T}^1(X^1)-\mathscr{T}^2(X^2)\|^2
      &\leq C|x_0^1-x_0^2|^2+CE\bigg[\bigg(\int_0^T \bigg|\int_{\mathcal{E}} b^1(t,X_t^1,E[X_t^1],e)-b^2(t,X_t^1,E[X_t^1],e)\lambda(\mathrm{d}e)\bigg|\mathrm{d}t \bigg)^2\bigg]\\
      &\quad+CE\bigg[\int_0^T\bigg|\int_{\mathcal{E}}\sigma^1(t,X_t^1,E[X_t^1],e)-\sigma^2(t,X_t^1,E[X_t^1],e) \lambda(\mathrm{d}e)\bigg|^2 \mathrm{d}t\bigg]\\
      &\quad+CE\bigg[\bigg(\int_0^T\int_{\mathcal{E}}\big|\gamma^1(t,X_{t-}^1,E[X_{t-}^1],e)-\gamma^2(t,X_{t-}^1, E[X_{t-}^1],e)\big| N(\mathrm{d}t,\mathrm{d}e)\bigg)^2\bigg]\\
      &\quad+CE\bigg[\int_0^T\int_{\mathcal{E}}\big|c^1(t,X_{t-}^1,E[X_{t-}^1],e)-c^2(t,X_{t-}^1,E[X_{t-}^1],e)\big|^2 N(\mathrm{d}t,\mathrm{d}e)\bigg].
    \end{aligned}
  \end{equation*}
  Then we have the conclusion for sufficiently small time interval. For any $T\in R$, let assumed that $T=2\delta$, where $\delta$ satisfies $2L(\delta)<1$. We give two SDEPs as following
  \begin{equation*}
    \begin{aligned}
      \bar X^i_t&=x_0^i
      +\int_0^t\int_{\mathcal{E}} b^i(s,\bar X_s^i,E[\bar X_s^i],e)\lambda(\mathrm{d}e)\mathrm{d}s+\int_0^t \int_{\mathcal{E}} \sigma^i(s,\bar X_s^i,E[\bar X_s^i],e)\lambda(\mathrm{d}e)\mathrm{d}B_s\\
      &\quad+\int_0^t\int_{\mathcal{E}} \gamma^i(s,\bar X_{s-}^i,E[\bar X_{s-}^i],e)N(\mathrm{d}t,\mathrm{d}e) +\int_0^t\int_{\mathcal{E}} c^i(s,\bar X_{s-}^i,E[\bar X_{s-}^i],e)\tilde{N}(\mathrm{d}t,\mathrm{d}e),\ t\in[0,\delta],
    \end{aligned}
  \end{equation*}
  and
  \begin{equation*}
    \begin{aligned}
      \tilde X^i_t&=\bar X_{\delta}^i
      +\int_0^t\int_{\mathcal{E}} b^i(s,\tilde X_s^i,E[\tilde X_s^i],e)\lambda(\mathrm{d}e)\mathrm{d}s+\int_0^t \int_{\mathcal{E}} \sigma^i(s,\tilde X_s^i,E[\tilde X_s^i],e)\lambda(\mathrm{d}e)\mathrm{d}B_s\\
      &\quad+\int_0^t\int_{\mathcal{E}} \gamma^i(s,\tilde X_{s-}^i,E[\tilde X_{s-}^i],e)N(\mathrm{d}t,\mathrm{d}e) +\int_0^t\int_{\mathcal{E}} c^i(s,\tilde X_{s-}^i,E[\tilde X_{s-}^i],e)\tilde{N}(\mathrm{d}t,\mathrm{d}e),\ t\in [\delta,2\delta].
    \end{aligned}
  \end{equation*}
  It is easy to see that there exist a unique solution of the above equations, respectively. Now we define
  \begin{equation*}
    X_t^i=\left\{
      \begin{aligned}
        &\bar{X}_t^i,\qquad t\in[0,\delta],\\
        &\tilde{X}_t^i,\qquad t\in[\delta,2\delta].
      \end{aligned}
    \right.
  \end{equation*}
  We can check that $X^i$ is the solution of mean-field SDEP \eqref{xiii}. Then we have
  \begin{equation*}
    \begin{aligned}
      E\bigg[\sup_{t\in[0,T]}|X_t^1-X_t^2|^2\bigg]
      &\leq E\bigg[\sup_{t\in[0,\delta]}|X_t^1-X_t^2|^2\bigg] +E\bigg[\sup_{t\in[\delta,2\delta]}|X_t^1-X_t^2|^2\bigg]\\
      &= E\bigg[\sup_{t\in[0,\delta]}|\bar X_t^1-\bar X_t^2|^2\bigg] +E\bigg[\sup_{t\in[\delta,2\delta]}|\tilde X_t^1-\tilde X_t^2|^2\bigg]\\
      &\leq C|x_0^1-x_0^2|^2 +CE\bigg[\bigg(\int_0^T \bigg|\int_{\mathcal{E}} b^1(t,X_t^1,E[X_t^1],e)-b^2(t,X_t^1,E[X_t^1],e)\lambda(\mathrm{d}e)\bigg|\mathrm{d}t \bigg)^2\bigg]\\
      &\quad+CE\bigg[\int_0^T\bigg|\int_{\mathcal{E}}\sigma^1(t,X_t^1,E[X_t^1],e)-\sigma^2(t,X_t^1,E[X_t^1],e) \lambda(\mathrm{d}e)\bigg|^2 \mathrm{d}t\bigg]\\
      &\quad+CE\bigg[\bigg(\int_0^T\int_{\mathcal{E}}\big|\gamma^1(t,X_{t-}^1,E[X_{t-}^1],e)-\gamma^2(t,X_{t-}^1, E[X_{t-}^1],e)\big| N(\mathrm{d}t,\mathrm{d}e)\bigg)^2\bigg]\\
      &\quad+CE\bigg[\int_0^T\int_{\mathcal{E}}\big|c^1(t,X_{t-}^1,E[X_{t-}^1],e)-c^2(t,X_{t-}^1,E[X_{t-}^1],e)\big|^2 N(\mathrm{d}t,\mathrm{d}e)\bigg].
    \end{aligned}
  \end{equation*}
  Then the proof is complete.
\end{proof}

\section{Appendix B}
$(M,\mathscr{M})$ is a measurable space, $K:\Omega\times\mathscr{M}\rightarrow R_+$ is a finite transition kernel from $\Omega$ to $M$.
\begin{proposition1}\label{B.1.}
  Let $(\varphi_n), \varphi:E\times M\rightarrow R$ be $\mathscr{F}\otimes \mathscr{M}$ measurable function,  $(\varphi_n)$ converge to $\varphi$ in $P\times K$. If there exists a $\mathscr{F}\otimes \mathscr{M}$ measurable function $\phi$ satisfying $\int_M\phi(\omega,y)K(\omega, \mathrm{d}y)\leq \infty, P$-a.s. such that $|\varphi_n| \leq \phi$. Then
  \begin{eqnarray*}
    \int_M\varphi_n(\omega,y)K(\omega,\mathrm{d}y)\rightarrow \int_M\varphi(\omega,y)K(\omega,\mathrm{d}y),\qquad \text{in } P.
  \end{eqnarray*}
\end{proposition1}
\begin{proof}
  We define
  \begin{eqnarray*}
    \begin{aligned}
      \eta_n(\omega):&=\int_M\varphi_n(\omega,y)K(\omega,\mathrm{d}y),\\
      \eta(\omega):&=\int_M\varphi(\omega,y)K(\omega,\mathrm{d}y).
    \end{aligned}
  \end{eqnarray*}
  If for any subsequence $\eta_{n_{k}}$ of $\eta_n$, there exists a subsequence $\eta_{n_{k_{l}}}$ of $\eta_{n_{k}}$ converge to $\eta,\ P$-a.s., then we obtain $\eta_n$ converge to $\eta$ in $P$. There exists a subsequence $\varphi_{n_{k}}$ converge to $\varphi$ in $P\times K$, so there exists a subsequence $\varphi_{n_{k_{l}}}$ satisfying $\varphi_{n_{k_{l}}}$ converge to $\varphi$, $P\times K$-a.s..
  Set
  \begin{eqnarray*}
    A:=\{(\omega,y)|\lim_{n_{k_l}\rightarrow \infty}\varphi_{n_{k_l}}(\omega,y)=\varphi(\omega,y),\ P\times K \text{-a.s.}\}.
  \end{eqnarray*}
  For any $\omega\in \Omega$, we have $A_{\omega}:=\{y\in M|(\omega,y)\in A\}$.
  Obviously, $A_{\omega}\in \mathscr{M}$.
  Then
  \begin{eqnarray*}
    \int_\Omega K(\omega,M)P(\mathrm{d}\omega)=(P\times K)(\Omega\times M)=\int_\Omega\bigg(\int_MI_{A_{\omega}}(y) K(\omega,\mathrm{d}y)\bigg)P(\mathrm{d}\omega).
  \end{eqnarray*}
  Thus we have $K(\omega,A_{\omega})=K(\omega,M)$ for every $\omega$, $P$-a.s.. This means that $A_{\omega}^c$ is a zero measurable set in $K(\omega,\cdot)$.
  Setting
  \begin{eqnarray*}
    \begin{aligned}
      B:&=\{\omega\in\Omega|K(\omega,A_{\omega})=K(\omega,M)\},\\
      C:&=\bigg\{\omega\in\Omega|\int_Mg(\omega,y)K(\omega,\mathrm{d}y)\leq \infty\bigg\}.
    \end{aligned}
  \end{eqnarray*}
  It is easy to see that $P(B^c)=P(C^c)=0$. For any $\omega\in B\bigcap C$, $\varphi_{n_{k_l}}(\omega,\cdot)$ converge to $\varphi(\omega,\cdot)$, $K(\omega,\cdot)$-a.s.. Then we have the result by dominated convergence theorem.
\end{proof}

\begin{proposition1}\label{B.2.}
  Under the same assumption of Proposition A1, the different point is that $\phi$ satisfies
  \begin{equation}\label{appendix17}
    \int_{\mathcal{E}}\bigg(\int_M\phi(\omega,y)K(\omega,\mathrm{d}y)\bigg)^2\mu(\mathrm{d}\omega)<\infty,
  \end{equation}
  then we have
  \begin{eqnarray*}
    \int_{\mathcal{E}}\bigg(\int_M\varphi_n(\omega,y)K(\omega,\mathrm{d}y)\bigg)^2\mu(\mathrm{d}\omega)\rightarrow \int_{\mathcal{E}}\bigg(\int_M\varphi(\omega,y)K(\omega,\mathrm{d}y)\bigg)^2\mu(\mathrm{d}\omega).
  \end{eqnarray*}
\end{proposition1}
\begin{proof}
  By \eqref{appendix17}, we have
  \begin{equation*}
    \int_M\phi(\omega,y)K(\omega,\mathrm{d}y)<\infty,\qquad \mu\text{-a.s.}.
  \end{equation*}
  By Proposition \ref{B.1.}, we have
  \begin{eqnarray*}
    \int_M\varphi_n(\omega,y)K(\omega,\mathrm{d}y)\rightarrow \int_M\varphi(\omega,y)K(\omega,\mathrm{d}y)\qquad \text{in } \mu.
  \end{eqnarray*}
  Then we obtain the result by usual dominated convergence theorem.
\end{proof}

\section{ Proof of Lemma \ref{lemma551}}
\begin{proof}
For any $t\geq 0$ and $U\in\mathscr{E}$, we have
\begin{equation*}
  N(\omega,[0,t]\times U)=\sum_{n=1}^{\infty}\mathbbm{1}_{\{U_n\in U\}}\mathbbm{1}_{\{T_n\leq t\}}.
\end{equation*}
Set $X_t:=N(\omega,[0,t]\times U)$. Since $\lambda$ is finite measure, $\{T_n\}_{n\geq 1}$ is a sequence of stopping times which is strictly increasing. Then we have
\begin{equation*}
  X_{T_1}=\mathbbm{1}_{\{U_1\in U\}}.
\end{equation*}
Because for any $U\in\mathscr{B}({\mathcal{E}})$, $X_t:=N(\omega,[0,t]\times U)$ is a progressive process. Then we have $X_{T_1}$ is $\mathscr{F}_{T_1}$ measurable random variable, so $U_1$ is $\mathscr{F}_{T_1}$ measurable random variable. Similarly, we can prove that for any $n\geq 1$, $U_n$ is $\mathscr{F}_{T_n}$ measurable random variable.

For a given $n\geq 1$, we define the following set
\begin{equation*}
  A:=\{(\omega,t,e)\ |\ T_n(\omega)\leq t \text{ and } U_n(\omega)\leq e\}.
\end{equation*}
For a fixed variable $e$, $U_n$ is $\mathscr{F}_{T_n}$ measurable random variable, so $A(e)=\{T_n(\omega)\leq t\}\cap\{U_n(\omega)\leq e\}$ is a progressive set. For fixed $(\omega,t)$, $\mathbbm{1}_{A(\omega,t)}(e)$ is a right continuous function on $R$, so $A$ is $\mathscr{G}\otimes \mathscr{B}({\mathcal{E}})$ measurable. Similarly, we also prove that
\begin{equation*}
  \begin{aligned}
    A_1&:=\{(\omega,t,e)\ |\ T_n(\omega)\leq t \text{ and } U_n(\omega)< e\},\\
    A_2&:=\{(\omega,t,e)\ |\ T_n(\omega)< t \text{ and } U_n(\omega)\leq e\},
  \end{aligned}
\end{equation*}
are $\mathscr{G}\otimes \mathscr{B}({\mathcal{E}})$ measurable. Then we have $[\![(T_n,U_n)]\!]=A-(A_1\cup A_2)$ is $\mathscr{G}\otimes \mathscr{B}({\mathcal{E}})$ measurable.
\end{proof}

\bibliography{sample}
\bibliographystyle{unsrt}

\end{document}